\documentclass[12pt]{article}
\usepackage[utf8]{inputenc}
\usepackage[margin=1in]{geometry}
\usepackage{amsmath}
\usepackage{amssymb}
\usepackage{amsthm}
\usepackage{mathtools}
\usepackage[english]{babel}
\usepackage{booktabs}
\usepackage{fancyhdr}
\usepackage{titling}
\usepackage[colorlinks=true, allcolors=blue]{hyperref}

\title{Cotangent Models of Nilpotent Orbit Closures}
\author{Boming Jia}
\date{}
\hypersetup{pdftitle={Cotangent Models of Nilpotent Orbit Closures},pdfauthor={Boming Jia},pdfsubject={Cotangent models of nilpotent orbit closures}}

\newtheorem{theorem}{Theorem}[section]
\newtheorem{proposition}[theorem]{Proposition}
\newtheorem{lemma}[theorem]{Lemma}
\theoremstyle{remark}

\theoremstyle{plain}

\DeclareMathOperator{\Spec}{Spec}
\DeclareMathOperator{\Sing}{Sing}
\DeclareMathOperator{\rk}{rk}
\DeclareMathOperator{\Sym}{Sym}
\DeclareMathOperator{\Mat}{Mat}
\DeclareMathOperator{\Ad}{Ad}
\newcommand{\C}{\mathbb C}
\newcommand{\Gm}{\mathbb G_m}
\newcommand{\PP}{\mathbb P}
\newcommand{\OO}{\mathcal O}
\newcommand{\aff}{\mathrm{aff}}

\begin{document}
\setlength{\droptitle}{-6em}
\maketitle
\vspace{-3em}

\begin{abstract}
Let $\mathcal O$ be a nonzero nilpotent orbit in a complex simple Lie
algebra $\mathfrak g$.
For $\mathfrak g$ of classical type, we classify the orbit closures
$\overline{\mathcal O}$ that are isomorphic to
$(T^*X)^{\mathrm{aff}}$ for a smooth quasi-affine variety $X$.
In types $G_2$, $F_4$, and $E_8$, we show that no nonzero nilpotent
orbit closure admits such a cotangent model.
\end{abstract}

\section{Introduction.}\label{sec:introduction}

Let $X$ be a smooth quasi-affine variety.
We write $X^{\aff}=\Spec\Gamma(X,\mathcal O_X)$ for its affinization.
The cotangent bundle $T^*X$ is again quasi-affine, and we write
$(T^*X)^{\aff}$ for its affinization.

\begin{theorem}\label{thm:classification}
Let $\mathcal O$ be a nonzero nilpotent orbit in a complex simple Lie
algebra of classical type.
There is a smooth quasi-affine variety $X$ with
\[
\overline{\mathcal O}\cong(T^*X)^{\aff}
\]
if and only if $\mathcal O$ occurs in the following table.
\[
\begin{array}{c@{\qquad}l@{\qquad}l}
\toprule
\text{type}&\text{nonzero orbits } \OO_\lambda&\text{range}\\
\midrule
A_{n-1}&\lambda=(2^r,1^{n-2r})
&1\leq r\leq\lfloor(n-2)/2\rfloor\\[2pt]
B_n&\lambda=(2^2,1^{2n-3})
&n\geq3\\[2pt]
C_n&\lambda=(2^r,1^{2n-2r})
&1\leq r\leq n-1,\quad n\geq2\\[2pt]
D_n&\lambda=(2^{2r},1^{2n-4r})
&1\leq r\leq\lfloor(n-2)/2\rfloor,\quad n\geq4\\[2pt]
\bottomrule
\end{array}
\]
Here $\lambda$ denotes the Jordan partition of the nilpotent orbit
$\OO_\lambda$.
\end{theorem}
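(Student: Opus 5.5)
We prove the two directions separately: explicit constructions for the orbits in the table, and a list of necessary conditions that rule out every other orbit.

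\emph{Constructions.} Each listed closure will be written as $(T^*X)^{\aff}$ with $X$ an open subset of a space of maps, using classical invariant theory.

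In type $A_{n-1}$ we take $X=\{A\in\Hom(\C^r,\C^n):\rk A=r\}$. Then $T^*X=X\times\Hom(\C^n,\C^r)$, and the complement of $X$ in $\Hom(\C^r,\C^n)$ has codimension $n-r+1\geq 2$. Hence
\[
\Gamma(T^*X,\OO)=\C[\Hom(\C^r,\C^n)\times\Hom(\C^n,\C^r)].
\]
This is not yet the orbit closure, so we pass to $X/SL_r$, the Stiefel-type quotient, or more precisely to the open subset of $\Hom(\C^r,\C^n)/\!/SL_r$. Alternatively we use the moment map $(A,B)\mapsto AB$ with a $GL_r$-reduction. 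The natural candidate is $X=$ the rank-$r$ locus in $\Hom(\C^r,\C^n)$ modulo $SL_r$, i.e.\ the affine cone over the Grassmannian with the vertex removed. Its cotangent bundle should have affinization $\{(A,B):BA=0\}/\!/SL_r$, and by first fundamental theorem arguments this identifies with $\{x\in\mathfrak{gl}_n:x^2=0,\ \rk x\leq r\}$. The strict inequality $2r\le n-2$ is exactly what is needed for the relevant codimension and normality statements: the boundary has codimension $\geq2$, and the affinization does not acquire extra functions, as it does when $2r\geq n-1$.

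For type $C_n$ we use $X=$ the rank-$r$ maps $\C^r\to\C^{2n}$ modulo $O_r$, and the orbits $(2^r,1^{2n-2r})$ arise as $Sp_{2n}$-moment images. For types $D_n$ and $B_n$ we use the analogous construction with $Sp_{2r}$ acting on $\Hom(\C^{2r},\C^N)$. In type $B_n$ the closure $\overline{\OO}_{(2^2,1^{2n-3})}$ should be handled separately; we expect it to be the cotangent bundle of the cone over an isotropic Grassmannian, or of $\C^{2n+1}$ minus a quadric.

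In every case the identification follows the same pattern:
\begin{enumerate}
\item compute $\Gamma(T^*X)$ as invariants;
\item apply the FFT/SFT to get generators and relations;
\item compare with the known normal orbit closure, using Kraft--Procesi normality results.
\end{enumerate}

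\emph{Necessary conditions.} Suppose $\overline{\OO}\cong(T^*X)^{\aff}$. Then:
\begin{enumerate}
\item The dilation action on fibres gives a $\Gm$-action of weight $1$ on the symplectic form. We would show that this forces $\overline{\OO}$ to be normal, since $(T^*X)^{\aff}$ is normal.
\item It forces $\dim X=\tfrac12\dim\OO$.
\item It forces a positive-dimensional fibre-linear structure. Concretely, $\C[\overline{\OO}]$ carries a second grading in which the Poisson bracket has degree $-1$ and the functions of degree $0$ form $\C[X]$, a subalgebra of Gelfand--Kirillov dimension $\tfrac12\dim\OO$ that is Poisson-commutative and Poisson-closed.
\end{enumerate}
Uniqueness of conical symplectic structures (Namikawa) should let us assume this grading is induced by a cocharacter of $G$, or by $\Gm$ commuting with the standard one. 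Thus the cocharacter lies in $G$ and gives a grading $\mathfrak g=\bigoplus\mathfrak g_i$ with $\C[X]$ corresponding to a Lagrangian polarization. This places strong restrictions: $\OO$ must meet a parabolic nilradical in a Lagrangian way with abelian-type grading. Concretely, we expect the condition to become
\begin{itemize}
\item $\OO$ is spherical of height $2$;
\item $\OO\cap\mathfrak g_1$ is Lagrangian for a $\mathbb Z$-grading with $\mathfrak g_{\geq2}$ acting trivially.
\end{itemize}
Running through the height-$2$ orbits (partitions $(2^a,1^b)$, plus $(3,1^k)$ in the orthogonal types) and checking the Lagrangian and normality conditions should yield the table. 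We must also exclude the remaining boundary cases: $2r\in\{n-1,n\}$ in type $A$, $r=n$ in type $C$, $(2^{2r},\dots)$ with $4r\geq 2n-2$ in type $D$, together with the minimal orbits in $B$ and $D$ and the orbits $(3,1^k)$. For these we would use the fact that $\Gamma(T^*X)$ is strictly larger than the candidate closure, or produce a $\mathbb Z/2$ cover or a non-normality obstruction. Both approaches require detailed case work.

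\emph{Main obstacle.} We expect the main obstacle to be the reduction in the necessary direction: showing that the extra $\Gm$-grading, equivalently the Lagrangian subalgebra $\C[X]$, can be conjugated into a cocharacter of $G$. We would first try to use that $\mathrm{Aut}$ of the conical symplectic variety $\overline{\OO}$ has identity component $G$, up to known exceptions such as the minimal orbit of $B_n$ and $\mathfrak{so}_8$ triality. The $\Gm$ commuting with dilation then lies in $G$ times scalars.
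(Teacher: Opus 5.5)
Both directions of your proposal have genuine gaps.

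\emph{Sufficiency.} The varieties you propose are not cotangent models, as a dimension count shows.

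In type $A_{n-1}$, the full-rank locus of $\operatorname{Hom}(\C^r,\C^n)$ modulo $SL_r$ is the punctured cone over $\operatorname{Gr}(r,n)$. It has dimension $r(n-r)+1$, so $\dim T^*X=2r(n-r)+2$, whereas $\dim\mathcal O_{(2^r,1^{n-2r})}=2r(n-r)$. The confusion is between a cotangent bundle and a Hamiltonian reduction. The Kraft--Procesi description $\{x:x^2=0,\ \rk x\le r\}$ is the $GL_r$-reduction of $T^*\operatorname{Hom}(\C^r,\C^n)$, whose base $\operatorname{Gr}(r,n)$ is projective, not quasi-affine. Passing to $SL_r$ changes the moment condition to $BA\in\C\cdot 1$ and adds Plücker-type invariants.

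In type $C_n$, your base has $\dim T^*X=4nr-r(r-1)\neq r(2n-r+1)=\dim\mathcal O_{(2^r,1^{2n-2r})}$. Here the orbit closure is the $O_r$-reduction of the symplectic vector space $\operatorname{Hom}(\C^r,\C^{2n})$, not the cotangent bundle of a quotient. The $Sp_{2r}$ construction and both type $B$ suggestions fail for the same reason; the target dimension in type $B_n$ is $4n-4$.

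What works, and what the paper cites, is the Fu--Liu model $X=\mathcal O\cap\mathfrak u^+$ for a cominuscule parabolic:
\begin{itemize}
\item in type $A_{n-1}$, rank-$r$ matrices in $\Mat_{p\times q}$ with $p+q=n$ and $r<p\le q$;
\item in types $C_n$ and $D_n$, rank-$r$ symmetric, resp.\ rank-$2r$ skew, $n\times n$ matrices;
\item in type $B_n$, the punctured isotropic cone in $\C^{2n-1}$.
\end{itemize}

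\emph{Necessity.} The reduction to a cocharacter of $G$ is asserted rather than proved, and the tools you name do not supply it:
\begin{itemize}
\item The isomorphism is only one of varieties, not of Poisson varieties.
\item Fiber dilation has weight $0$ on $\C[X]$, so it is not a conical action of the kind Namikawa's uniqueness results address.
\item $\operatorname{Aut}(\overline{\mathcal O})$ is not an algebraic group. Already $\C^2/\{\pm1\}$ has the automorphisms $(x,y)\mapsto(x,y+f(x))$ with $f$ odd. So there is no identity component equal to $G$ into which a possibly nonlinear $\Gm$-action must fall.
\end{itemize}

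The missing idea is linearization. Every automorphism fixes the vertex, which is the unique point with $\dim T_y\overline{\mathcal O}=\dim\mathfrak g$. The tangent representation $\rho$ at the vertex is linear with nonnegative weights and preserves the cone $\overline{\mathcal O}$, hence also $\overline{\mathcal O}_{\min}$. Demazure's theorem on $\operatorname{Aut}(G/P_\theta)^\circ$ then puts the projective image of $\rho$ in $G$. The only exception is the minimal orbit of $\mathfrak{sp}_{2n}$, not of $B_n$.

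Even granting this reduction, your case analysis lacks the two quantitative facts that drive the paper.
\begin{itemize}
\item $\dim(\overline{\mathcal O}\cap\mathfrak g^{\rho(\Gm)})=\dim X$. This holds because the fixed locus of fiber dilation is $X^{\aff}$, and its tangent cone at the vertex is this intersection.
\item $\dim\mathfrak g^{\rho(\Gm)}\ge\dim X+1$. If equality held, $x_0$ would be smooth in $X^{\aff}$. Then the open immersion $T^*(X^{\aff})_{\mathrm{reg}}\to\overline{\mathcal O}$ would send the zero covector at $x_0$ to the singular vertex.
\end{itemize}

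This strict inequality is what excludes $2r\in\{n-1,n\}$ in type $A$, $r=n$ in type $C$, $r=\lfloor n/2\rfloor$ in type $D$, and $(3,1^{N-3})$ in the orthogonal types. Normality cannot do this job, since every type $A$ orbit closure is normal. Showing that $\Gamma(T^*X)$ is too large for one candidate $X$ says nothing about other choices of $X$.

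Finally, you propose to exclude the minimal orbits of $B_n$ and $D_n$, but these belong to the table.
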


For each nilpotent orbit in the table above, Fu and Liu construct an
explicit smooth quasi-affine variety $X$ in
\cite[Table 1 and Theorems 1.4 and 4.2]{FuLiu}.
The following table records its affinization $X^{\aff}$.
\[
\begin{array}{c@{\qquad}l@{\qquad}l}
\toprule
\text{type}&\text{examples of }X^{\aff}&\text{parameters}\\
\midrule
A_{n-1}&\{A\in\Mat_{p\times q}:\rk A\leq r\}
&p+q=n,\quad 1\leq r<p\leq q\\[2pt]
B_n&\{v\in\C^{2n-1}:(v,v)=0\}
&n\geq3\\[2pt]
C_n&\{A\in\Sym^2\C^n:\rk A\leq r\}
&1\leq r\leq n-1,\quad n\geq2\\[2pt]
D_n&\{A\in\Lambda^2\C^n:\rk A\leq2r\}
&1\leq r\leq\lfloor(n-2)/2\rfloor,\quad n\geq4\\[-1pt]
&\{v\in\C^{2n-2}:(v,v)=0\}
&r=1,\quad n\geq4\\
\bottomrule
\end{array}
\]

In types $G_2$, $F_4$, and $E_8$, no nonzero nilpotent orbit closure
admits a cotangent model.
In types $E_6$ and $E_7$, the only remaining candidates are the two
minimal orbit closures and the closure of $\mathcal O(2A_1)$ in type
$E_7$.

\vspace{1em}
\textbf{Acknowledgments.}
The author was supported by NSFC Grant No.~12225108 and the Shuimu Scholar
Program in Tsinghua University. The author used ChatGPT extensively during
the preparation of this work. He would like to thank the model GPT-5.6 Sol
for generating an earlier draft of this paper and collaborating with the
author throughout the revision process.

\section{Group actions on nilpotent orbit closures.}

Let $\mathfrak g$ be a complex simple Lie algebra, and let $G$ be its
adjoint group.
Regard $G$ as a subgroup of $\operatorname{PGL}(\mathfrak g)$ via the
adjoint representation.
Let $\mathcal O\subset\mathfrak g$ be a nonzero nilpotent orbit.

\begin{lemma}\label{lem:intrinsic-vertex}
The vertex $0$ is the unique point $y\in\overline{\mathcal O}$ such that
\[
\dim T_y\overline{\mathcal O}=\dim\mathfrak g.
\]
So every algebraic automorphism of $\overline{\mathcal O}$ fixes the vertex $0$.
\end{lemma}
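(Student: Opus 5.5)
We prove two things: that the Zariski tangent space of $\overline{\mathcal O}$ at $0$ is all of $\mathfrak g$, and that at every other point it is strictly smaller. The claim about automorphisms then follows formally, since the dimension of the Zariski tangent space is preserved by any algebraic automorphism.

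\emph{Step 1: the tangent space at $0$.} Since $\overline{\mathcal O}$ is a cone, its ideal $I\subset\C[\mathfrak g]$ is homogeneous. Because $\mathcal O\neq0$, the ideal $I$ contains no nonzero linear forms. Indeed, the linear forms vanishing on $\overline{\mathcal O}$ cut out a linear subspace. That subspace is $G$-stable, because $\overline{\mathcal O}$ is $G$-stable, and it is nonzero, because it contains $\mathcal O$. Since $\mathfrak g$ is simple, it is an irreducible $G$-module, so the span of $\mathcal O$ is all of $\mathfrak g$. The Zariski tangent space at $0$ is the common zero set of the linear parts of the elements of $I$. Those linear parts are exactly the degree-one elements of $I$, which are zero. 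Hence $T_0\overline{\mathcal O}=\mathfrak g$.

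\emph{Step 2: other points.} Let $y\in\overline{\mathcal O}$ with $y\neq0$. Then $y$ lies in some nonzero nilpotent orbit $\mathcal O'\subset\overline{\mathcal O}$. The tangent space is constant along $G$-orbits up to the action of $G$, so its dimension depends only on $\mathcal O'$. We must show $T_y\overline{\mathcal O}\neq\mathfrak g$, which means exhibiting a function $f\in I$ whose differential $df_y$ is nonzero.

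The natural candidates are the invariant polynomials. Every $G$-invariant homogeneous polynomial of positive degree vanishes on the nilpotent cone, so it lies in $I$. By Kostant, the differentials of the basic invariants at $y$ are linearly independent exactly when $y$ is regular. This handles only the case where $y$ is regular, that is, $\overline{\mathcal O}$ is the whole nilpotent cone and $y$ is a regular nilpotent element. It fails at non-regular $y$. For example, the differential of the Killing form $\kappa(x,x)$ at $y$ is $2\kappa(y,\cdot)$. This is nonzero, since $\kappa$ is nondegenerate and $y\neq0$, so it does work. In general, the quadratic Casimir $q(x)=\kappa(x,x)$ is $G$-invariant and vanishes on all nilpotents, so $q\in I$, and $dq_y=2\kappa(y,\cdot)\neq0$ for every $y\neq0$.

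Therefore $T_y\overline{\mathcal O}\subseteq\ker\kappa(y,\cdot)$, a hyperplane, and so $\dim T_y\overline{\mathcal O}<\dim\mathfrak g$ for every $y\neq0$. This settles the uniqueness statement, and this short argument is the whole of Step 2.

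\emph{Step 3: automorphisms.} An algebraic automorphism $\phi$ of $\overline{\mathcal O}$ induces isomorphisms $T_y\overline{\mathcal O}\cong T_{\phi(y)}\overline{\mathcal O}$. By Steps 1 and 2, the point $0$ is the only point whose tangent space has dimension $\dim\mathfrak g$, so $\phi(0)=0$.

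The only step that needs care is Step 1: checking that the linear span of $\mathcal O$ is all of $\mathfrak g$, which rests on the irreducibility of the adjoint representation. The rest is formal.
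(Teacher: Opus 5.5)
Your proof is correct and follows the paper's argument. Irreducibility of the adjoint representation forces $\overline{\mathcal O}$ to span $\mathfrak g$, so $T_0\overline{\mathcal O}=\mathfrak g$; and the Killing quadratic form $q(z)=\kappa(z,z)$ vanishes on the nilpotent cone with $dq_y=2\kappa(y,-)\neq0$ for $y\neq0$, which cuts every other tangent space into a hyperplane (the detour through Kostant's theorem in Step 2 is unnecessary and can be deleted).
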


\begin{proof}
The linear span of $\overline{\mathcal O}$ is a nonzero
$G$-stable subspace of the adjoint representation.
Since $\mathfrak g$ is simple, the adjoint representation is
irreducible. Thus $\overline{\mathcal O}$ spans $\mathfrak g$.
Since $\overline{\mathcal O}$ is a cone, its tangent space at the
vertex is its linear span. Therefore
\[
T_0\overline{\mathcal O}=\mathfrak g.
\]

Let $y\ne0$.
The Killing quadratic form $q(z)=\kappa(z,z)$ vanishes on the
nilpotent cone. Its differential at $y$ is the nonzero linear form
$dq_y=2\kappa(y,-)$.
Hence
\[
\dim T_y\overline{\mathcal O}<\dim\mathfrak g.
\]
Since an automorphism preserves the dimensions of tangent spaces, it
fixes $0$.
\end{proof}

\begin{lemma}\label{lem:minimal-preserved}
Let $H$ be a connected algebraic group acting on
$\overline{\mathcal O}$.
Then $H$ preserves the minimal nilpotent orbit closure
$\overline{\mathcal O}_{\min}$.
\end{lemma}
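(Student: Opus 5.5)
Since $H$ is connected, it suffices to show that $\overline{\mathcal O}_{\min}\cap\overline{\mathcal O}$ (note $\overline{\mathcal O}_{\min}\subset\overline{\mathcal O}$ because $\mathcal O\neq 0$) can be described intrinsically by some invariant preserved by all automorphisms, or at least by all automorphisms in a connected group. The plan is to imitate Lemma~\ref{lem:intrinsic-vertex}, but with a finer intrinsic invariant. For $y\in\overline{\mathcal O}$ consider the tangent space $T_y\overline{\mathcal O}$. By Lemma~\ref{lem:intrinsic-vertex}, every automorphism fixes $0$. It therefore acts linearly on the Zariski tangent space $T_0\overline{\mathcal O}=\mathfrak g$, and it preserves the tangent cone $C_0\overline{\mathcal O}$. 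Because $\overline{\mathcal O}$ is a cone, $C_0\overline{\mathcal O}=\overline{\mathcal O}$ as a subvariety of $\mathfrak g$. We thus obtain a homomorphism $H\to GL(\mathfrak g)$, $h\mapsto d h_0$, whose image $H'$ is connected and preserves $\overline{\mathcal O}\subset\mathfrak g$.

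The key step is to show that any connected subgroup of $GL(\mathfrak g)$ preserving $\overline{\mathcal O}$ preserves $\overline{\mathcal O}_{\min}$. Let $L\subset GL(\mathfrak g)$ be the full stabilizer of $\overline{\mathcal O}$, and let $L^\circ$ be its identity component. I will argue that $\mathrm{Lie}(L)$ normalizes $\mathrm{ad}\,\mathfrak g$. In fact I expect $L^\circ=G\cdot\mathbb C^\times$. Here is the approach. $\mathrm{Lie}(L)$ is a $G$-submodule of $\mathfrak{gl}(\mathfrak g)=\mathfrak g\otimes\mathfrak g^*$ that contains $\mathrm{ad}\,\mathfrak g+\mathbb C\cdot\mathrm{id}$. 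An element $\phi\in\mathfrak{gl}(\mathfrak g)$ lies in $\mathrm{Lie}(L)$ iff $\phi(x)\in T_x\overline{\mathcal O}=[\mathfrak g,x]$ for all $x\in\mathcal O$. Taking $x$ to be a highest weight vector of $\mathcal O_{\min}$ (which lies in $\overline{\mathcal O}$) gives only a weak condition. A cleaner route is the alternative I would try first: avoid computing $L$ and instead characterize $\overline{\mathcal O}_{\min}$ by the linear geometry of $\overline{\mathcal O}$ itself. Namely, $\overline{\mathcal O}_{\min}$ is the unique closed orbit in $\mathbb P(\overline{\mathcal O})$ under $G$. Equivalently, it is the set of $x\in\overline{\mathcal O}$ such that the ray $\mathbb C x$ is "most degenerate", for instance where $\dim T_x\overline{\mathcal O}$ attains its minimum over $\overline{\mathcal O}\setminus 0$. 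But this is not obviously $\overline{\mathcal O}_{\min}$, since the tangent dimension along $\mathcal O_{\min}$ need not be minimal.

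So I would instead use a Borel fixed point argument, which is the cleanest. $H'$ is connected and acts on the projective variety $\mathbb P(\overline{\mathcal O})\subset\mathbb P(\mathfrak g)$. Consider the subvariety $Z=\overline{\mathcal O}_{\min}\setminus 0$ and its translates $h Z$ for $h\in H'$. They are all closed in $\overline{\mathcal O}\setminus0$ and have the same dimension. The set $\{hZ\}$ is parametrized by the connected variety $H'/\mathrm{Stab}$. I would show that $\mathbb P(\overline{\mathcal O}_{\min})$ is rigid among subvarieties of $\mathbb P(\overline{\mathcal O})$: its deformations are governed by $H^0$ of the normal sheaf. Alternatively, and more concretely, $\overline{\mathcal O}_{\min}$ is the set of $x\in\overline{\mathcal O}$ such that the linear span of $T_x\overline{\mathcal O}$-related data is smallest. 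A characterization that is invariant under linear automorphisms preserving $\overline{\mathcal O}$ is the following: $x\in\overline{\mathcal O}_{\min}$ iff $\dim\operatorname{span}(\overline{\mathcal O}\cap(x+T_x))$ is minimal, or better, iff the line $\mathbb C x$ lies in infinitely many lines contained in... This is where I expect the main obstacle to be: finding a linear-geometric characterization of $\overline{\mathcal O}_{\min}$ inside $\overline{\mathcal O}$.

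My concrete choice is the following. $\overline{\mathcal O}_{\min}$ is exactly the set of $x\in\overline{\mathcal O}$ such that $\mathfrak{gl}$-rank considerations give $\mathrm{ad}(x)^2\mathfrak g\subset\mathbb C x$. Since $H'$ need not normalize $\mathrm{ad}$, I would instead first prove $L^\circ\subset G\cdot\mathbb C^\times$. The argument: $\mathrm{Lie}(L)$ is a $\mathfrak g$-submodule, and each irreducible summand $V\subset\mathfrak g\otimes\mathfrak g$ other than $\mathfrak g$ and $\mathbb C$ must be tested against the condition $\phi(e)\in[\mathfrak g,e]$ at the highest root vector $e\in\overline{\mathcal O}$. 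The highest weight vector of $V$ sends some weight vector to a vector of weight exceeding the weights in $[\mathfrak g,e]\subset\mathfrak g$, which gives a contradiction. With $L^\circ=G\mathbb C^\times$ established, $H'$ preserves $\overline{\mathcal O}_{\min}$. Finally, because $H$ acts on $\overline{\mathcal O}$ and not only linearly, I would pass from $H'$ to $H$ using the $\mathbb C^\times$-grading: I would filter by degree and argue that the set of points whose $H$-orbit closure meets only $\overline{\mathcal O}_{\min}$ is stable. This last step also needs care.
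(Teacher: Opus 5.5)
Your proposal has two genuine gaps. The first is the heart of the matter. The key step, that every connected subgroup of $\operatorname{GL}(\mathfrak g)$ preserving $\overline{\mathcal O}$ preserves $\overline{\mathcal O}_{\min}$ because $L^\circ=G\cdot\mathbb C^\times$, is never proved, and the weight argument you sketch for it fails.

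\begin{itemize}
\item You test $\phi\in\mathrm{Lie}(L)$ at the highest root vector $e$ against $[\mathfrak g,e]$. For non-minimal $\mathcal O$, however, $T_e\overline{\mathcal O}$ has dimension at least $\dim\mathcal O>\dim[\mathfrak g,e]$, so that is the wrong tangent space.
\item Take $\phi$ a highest weight vector of weight $\nu\neq0$. Then $\nu$ is a nonzero dominant element of the root lattice, so $\theta+\nu$ is not a weight of $\mathfrak g$, and hence $\phi(e)=0$. The test is vacuous and gives no contradiction.
\item Your sketch never uses that $\mathcal O$ is non-minimal. Yet for $(\mathfrak{sp}(V),\mathcal O_{\min})$ the stabilizer contains $\operatorname{GL}(V)$, and $\mathrm{Lie}(L)$ contains a summand isomorphic to $\Lambda^2_0V$. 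So no argument of this uniform shape can be correct.
\end{itemize}

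In the paper the logic runs the other way: the stabilizer computation in Proposition~\ref{prop:projective-stabilizer} is deduced from this lemma together with Demazure's theorem. Your key step is therefore at least as hard as the statement you want.

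The second gap is the passage from $H'=\{dh_0\}$ back to $H$, which may act nonlinearly. Knowing that $dh_0$ preserves $\overline{\mathcal O}_{\min}$ only tells you that the tangent cone at $0$ of $h(\overline{\mathcal O}_{\min})$ is $\overline{\mathcal O}_{\min}$. It does not give $h(\overline{\mathcal O}_{\min})=\overline{\mathcal O}_{\min}$, because the identification $C_0\overline{\mathcal O}=\overline{\mathcal O}$ is not $H$-equivariant. Filtering by degree is not an argument.

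The missing idea is an intrinsic invariant that every automorphism respects: the singular locus, used iteratively.
\begin{enumerate}
\item For a nilpotent orbit closure, $\Sing(\overline{\mathcal O})=\overline{\mathcal O}\setminus\mathcal O$.
\item Any action preserves $\Sing(\overline{\mathcal O})$. A connected group, both $H$ and $G$, preserves each of its finitely many irreducible components.
\item Each component is an irreducible closed $G$-stable subset of $\overline{\mathcal O}$. Since $\overline{\mathcal O}$ has finitely many $G$-orbits, each component is a nilpotent orbit closure.
\item If $\mathcal O\neq\mathcal O_{\min}$, then $\overline{\mathcal O}_{\min}$ lies in the boundary. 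So some component is a nonzero, $H$-stable orbit closure $\overline{\mathcal O'}$ with $\dim\mathcal O'<\dim\mathcal O$.
\item Repeat the argument for the $H$-action on $\overline{\mathcal O'}$. Every nonzero non-minimal orbit closure contains $\overline{\mathcal O}_{\min}$ in its boundary, and the dimension drops each time. So the descent continues until it reaches $\overline{\mathcal O}_{\min}$, which is therefore $H$-stable.
\end{enumerate}

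This handles arbitrary, possibly nonlinear, actions of connected groups with no representation-theoretic computation. Your discarded idea of using tangent-space dimensions was pointing in this direction. The right move is to take the singular locus stratum by stratum, rather than to minimize $\dim T_x\overline{\mathcal O}$.
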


\begin{proof}
If $\mathcal O=\mathcal O_{\min}$, there is nothing to prove. Assume
that $\mathcal O$ is not minimal. The boundary is the singular locus:
\[
\Sing(\overline{\mathcal O})
=\overline{\mathcal O}\setminus\mathcal O.
\]
Both $H$ and $G$ preserve
$\Sing(\overline{\mathcal O})$, hence act on the set of irreducible
components of $\Sing(\overline{\mathcal O})$.
Since both groups are connected, they preserve every component.
Since $\overline{\mathcal O}$ contains only finitely many $G$-orbits,
each irreducible component of $\Sing(\overline{\mathcal O})$ contains
a dense $G$-orbit and is therefore a nilpotent orbit closure.

The closure of every nonzero non-minimal nilpotent orbit contains
$\overline{\mathcal O}_{\min}$ in its boundary.
Its singular locus therefore has a nonzero irreducible component.
Choose such a component and repeat the argument.
The dimension drops at each step, so the process terminates.
The last nonzero closure has boundary $\{0\}$ and is
$\overline{\mathcal O}_{\min}$.
\end{proof}

\begin{lemma}\label{lem:sp-pgl}
Assume that $\mathfrak g=\mathfrak{sp}(V)$ for a $2n$-dimensional
complex vector space $V$, with $n\geq2$.
We identify
\[
\mathfrak{sp}(V)\cong\Sym^2V.
\]
If $\mathbb P\overline{\mathcal O}$ is invariant under the natural
$\operatorname{PGL}(V)$-action on $\mathbb P(\Sym^2V)$, then
\[
\overline{\mathcal O}=\overline{\mathcal O}_{\min}.
\]
\end{lemma}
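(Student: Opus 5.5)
Under the identification $\mathfrak{sp}(V)\cong\Sym^2V$, nilpotent elements of $\mathfrak{sp}(V)$ correspond to certain quadratic tensors. The plan is to exploit the fact that $\operatorname{PGL}(V)$-invariant closed subsets of $\PP(\Sym^2V)$ are very few, namely the rank loci, and then see which of them can lie in the nilpotent cone.

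\textbf{Step 1.} The $\operatorname{GL}(V)$-orbits on $\Sym^2V$ are classified by rank, since $\Sym^2V$ is the space of symmetric bilinear forms on $V^*$. The orbit closures are the loci
\[
R_k=\{s\in\Sym^2V:\rk s\leq k\},\qquad 0\leq k\leq 2n.
\]
Since $\overline{\mathcal O}$ is a cone, it is the affine cone over $\PP\overline{\mathcal O}$. The $\operatorname{PGL}(V)$-invariance of $\PP\overline{\mathcal O}$ therefore makes $\overline{\mathcal O}$ a closed $\operatorname{GL}(V)$-stable subset of $\Sym^2V$. It is irreducible, so it equals some $R_k$ with $k\geq1$, because $\mathcal O\neq0$.

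\textbf{Step 2.} Every nilpotent orbit closure lies in the nilpotent cone, so $R_k$ must consist of nilpotent elements. I will show this forces $k=1$.

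First I make the identification explicit. For $s=u\cdot w$, the corresponding endomorphism is
\[
x\mapsto\omega(u,x)w+\omega(w,x)u,
\]
up to a scalar. Rank-one elements $u^2$ give the maps $x\mapsto 2\omega(u,x)u$. These are nilpotent: they are exactly the elements of $\overline{\mathcal O}_{\min}$, the orbit with partition $(2,1^{2n-2})$, so $R_1=\overline{\mathcal O}_{\min}$.

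For $k\geq2$ I exhibit a non-nilpotent element of $R_2$. Choose $u,w$ with $\omega(u,w)=1$ and take $s=u\cdot w$. Then $s$ sends $u\mapsto\omega(w,u)u=-u$ and $w\mapsto\omega(u,w)w=w$. So it is semisimple with nonzero eigenvalues, and in particular not nilpotent. Since $R_2\subseteq R_k$, this gives $R_k\not\subseteq\mathcal N$ for every $k\geq2$.

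\textbf{Conclusion and obstacle.} Combining the two steps, $\overline{\mathcal O}=R_1=\overline{\mathcal O}_{\min}$. The only step needing real care is Step 1: I must justify passing from $\operatorname{PGL}(V)$-invariance of the projectivization to $\operatorname{GL}(V)$-stability of the cone, and then invoke the rank classification of symmetric forms. The invariance step is straightforward, because scalars act on $\Sym^2V$ by scaling and $\overline{\mathcal O}$ is conical.

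No assumption $n\geq2$ is really needed beyond ensuring that $\mathcal O\neq0$ gives a genuine statement. Dimension-count alternatives would also work: $\dim R_k$ for $k\geq2$ is at least $4n-1$, and this must be compared with $\mathcal N$. The explicit element, however, is cleaner.
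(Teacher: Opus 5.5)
Your proof is correct and follows the paper's argument: you use $\operatorname{GL}(V)$-stability of the cone to identify $\overline{\mathcal O}$ with a rank locus in $\Sym^2V$, rule out rank $\geq2$ with the same non-nilpotent element $u\odot w$ (where $\omega(u,w)=1$), and conclude $\overline{\mathcal O}=\{v^2:v\in V\}=\overline{\mathcal O}_{\min}$. The only cosmetic difference is that the paper takes the maximal rank $r$ occurring in $\overline{\mathcal O}$ to get $\overline{\mathcal O}=D_r$ directly, whereas you invoke irreducibility together with the finiteness of the rank orbits.
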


\begin{proof}
The variety $\overline{\mathcal O}$ is the affine cone over
$\mathbb P\overline{\mathcal O}$.
Hence every $\operatorname{GL}(V)$-lift of the projective
$\operatorname{PGL}(V)$-action preserves $\overline{\mathcal O}$ under
its action on $\Sym^2V$.
Its nonzero orbits are classified by rank.
Let $r$ be the maximal rank of an element of $\overline{\mathcal O}$,
and put
\[
D_r=\{q\in\Sym^2V:\rk q\leq r\}.
\]
The rank-$r$ orbit is contained in $\overline{\mathcal O}$, so its
closure $D_r$ is contained in $\overline{\mathcal O}$.
Since $r$ is maximal, the reverse inclusion also holds.
Thus $\overline{\mathcal O}=D_r$.

We claim that $r=1$.
Choose a symplectic two-plane $W=\langle e,f\rangle\subset V$ with
$\omega(e,f)=1$, and consider $q=e\odot f\in\Sym^2V$.
Then $\rk q=2$.
Under the standard isomorphism
\[
\Phi:\Sym^2V\longrightarrow\mathfrak{sp}(V),
\qquad
\Phi(u\odot v)(x)=\omega(u,x)v+\omega(v,x)u,
\]
we have $\Phi(q)(e)=-e$ and $\Phi(q)(f)=f$.
Thus $\Phi(q)$ is not nilpotent.
Hence $D_r$ is not contained in the nilpotent cone whenever $r\geq2$.

Since $\overline{\mathcal O}$ is a nonzero nilpotent orbit closure, we
must have $r=1$. The rank-one cone
$D_1=\{v^2:v\in V\}$ is precisely
$\overline{\mathcal O}_{\min}$.
\end{proof}

\begin{proposition}\label{prop:projective-stabilizer}
Assume that
$(\mathfrak g,\mathcal O)$ is not
$(\mathfrak{sp}_{2n},\mathcal O_{\min})$ for $n\geq2$.
Set
\[
\operatorname{Stab}_{\operatorname{PGL}(\mathfrak g)}
\bigl(\mathbb P\overline{\mathcal O}\bigr)
\coloneqq
\{g\in\operatorname{PGL}(\mathfrak g):
g\mathbb P\overline{\mathcal O}
=\mathbb P\overline{\mathcal O}\}.
\]
Then the identity component
\[
\operatorname{Stab}_{\operatorname{PGL}(\mathfrak g)}
\bigl(\mathbb P\overline{\mathcal O}\bigr)^\circ
=G.
\]
\end{proposition}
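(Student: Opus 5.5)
Let $S=\operatorname{Stab}_{\operatorname{PGL}(\mathfrak g)}(\mathbb P\overline{\mathcal O})^\circ$. Since $G$ is connected and preserves $\overline{\mathcal O}$, we have $G\subseteq S$, and the task is the reverse inclusion. The plan is to pass to Lie algebras and reduce to a known classification. Let $\tilde S\subset\operatorname{GL}(\mathfrak g)$ be the identity component of the preimage of $S$. Because $\overline{\mathcal O}$ is a cone, $\tilde S$ preserves $\overline{\mathcal O}$ and contains the scalars $\Gm$. So $\tilde S$ is a connected group acting on $\overline{\mathcal O}$, and Lemma~\ref{lem:minimal-preserved} shows that $\tilde S$ preserves $\overline{\mathcal O}_{\min}$. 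Hence $S$ is contained in the identity component of $\operatorname{Stab}_{\operatorname{PGL}(\mathfrak g)}(\mathbb P\overline{\mathcal O}_{\min})$. It therefore suffices to prove that this last identity component equals $G$, that is, to treat the case $\mathcal O=\mathcal O_{\min}$. The one exception is type $C$ with $\mathcal O\neq\mathcal O_{\min}$, which needs separate care.

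For the minimal orbit, $X=\mathbb P\overline{\mathcal O}_{\min}\subset\mathbb P(\mathfrak g)$ is the closed $G$-orbit of the highest weight line. It is a rational homogeneous space $G/P$ embedded by the adjoint representation, which is the very ample generator $L$ of $\operatorname{Pic}$, except in type $A$ where $\operatorname{Pic}$ has rank two. The embedding is linearly normal, since $H^0(G/P,L)=\mathfrak g^*$ by Borel--Weil. Consequently the connected projective stabilizer of $X$ equals the image of $\operatorname{Aut}^\circ(X)$ acting on $\mathbb P H^0(X,L)^*$. By Onishchik's classification (Demazure), $\operatorname{Aut}^\circ(G/P)$ is the adjoint group $G$ except in three cases: $\mathbb P^{2n-1}=\operatorname{Sp}_{2n}/P_1$, the quadric $Q^5=G_2/P_1$, and the spinor variety $\operatorname{SO}_{2n+1}/P_n$. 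Adjoint varieties are never of these exceptional forms: in type $G_2$ the adjoint variety is $G_2/P_2$, not $G_2/P_1$, and in type $B_n$ it is $\operatorname{SO}_{2n+1}/P_2$, not $P_n$. The only coincidence is type $C_n$, where $\mathbb P\overline{\mathcal O}_{\min}$ is the Veronese image of $\mathbb P^{2n-1}$ and the stabilizer is $\operatorname{PGL}(V)$. This is exactly the excluded case. In type $A$ one checks directly that the automorphism swapping the two factors of the incidence variety $\mathbb P(V)\times\mathbb P(V^*)$ comes from the outer automorphism and is not in the identity component. This settles the minimal case.

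In type $C_n$ with $\mathcal O\neq\mathcal O_{\min}$, the reduction instead gives $S\subseteq\operatorname{PGL}(V)$, acting on $\mathbb P(\Sym^2V)$. We argue by contradiction. If $S\supsetneq\operatorname{PSp}(V)$, then $S$ is a connected subgroup of $\operatorname{PGL}(V)$ strictly containing $\operatorname{PSp}(V)$. Since $\mathfrak{sp}(V)$ is a maximal subalgebra of $\mathfrak{sl}(V)$ (because $\mathfrak{sl}(V)=\mathfrak{sp}(V)\oplus\Lambda^2_0V$ with $\Lambda^2_0V$ irreducible), this forces $S=\operatorname{PGL}(V)$. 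Then Lemma~\ref{lem:sp-pgl} gives $\overline{\mathcal O}=\overline{\mathcal O}_{\min}$, a contradiction. Hence $S=G$.

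The main obstacle is the minimal-orbit step: identifying the projective stabilizer with $\operatorname{Aut}^\circ(G/P)$ and invoking Onishchik's list correctly, including the type-$A$ check. If citing that classification is undesirable, I would try a Lie-algebra computation instead. One would show that the stabilizer algebra $\mathfrak s\subset\mathfrak{gl}(\mathfrak g)$ is a $\mathfrak g$-submodule containing $\operatorname{ad}\mathfrak g+\C$. Any additional irreducible summand of $\mathfrak{gl}(\mathfrak g)=\mathfrak g\otimes\mathfrak g^*$ would then have to preserve the quadratic equations of $\overline{\mathcal O}_{\min}$ (Kostant's equations), and one would rule such summands out case by case. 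This route is considerably heavier than using the classification.
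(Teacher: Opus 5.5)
Your proposal is correct and follows essentially the same route as the paper: reduce via Lemma~\ref{lem:minimal-preserved} to the stabilizer of the adjoint variety $G/P_\theta$, embed that stabilizer into $\operatorname{Aut}(G/P_\theta)^\circ$ (you via linear normality, the paper via a direct eigenline argument), apply Demazure's theorem, and handle non-minimal orbits in type $C$ using the maximality of $\mathfrak{sp}(V)$ in $\mathfrak{sl}(V)$ together with Lemma~\ref{lem:sp-pgl}. Two minor points: the type-$A$ swap check is superfluous, because Demazure's theorem already describes the identity component; and your assertion that the $B_n$ adjoint variety is not $B_n/P_n$ fails for $n=2$, where $B_2/P_2\cong\PP^3$ is one of the exceptional cases. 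That slip is harmless only because $B_2=C_2$ falls under the excluded symplectic case, as the paper remarks.
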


\begin{proof}
Set
\[
K=
\operatorname{Stab}_{\operatorname{PGL}(\mathfrak g)}
\bigl(\mathbb P\overline{\mathcal O}\bigr)^\circ.
\]
Since $G$ preserves $\mathbb P\overline{\mathcal O}$, we have
$G\subseteq K$. The inverse image of $K$ in
$\operatorname{GL}(\mathfrak g)$ is connected and preserves
$\overline{\mathcal O}$. By Lemma \ref{lem:minimal-preserved}, it also
preserves $\overline{\mathcal O}_{\min}$. Thus
\[
K\subseteq
K_{\min}\coloneqq
\operatorname{Stab}_{\operatorname{PGL}(\mathfrak g)}
\bigl(\mathbb P\overline{\mathcal O}_{\min}\bigr)^\circ.
\]

Let $P_\theta\subset G$ be the stabilizer of a highest-root line. Then
\[
\mathbb P\overline{\mathcal O}_{\min}\cong G/P_\theta.
\]
Restriction defines a homomorphism
\[
K_{\min}\longrightarrow\operatorname{Aut}(G/P_\theta).
\]
This homomorphism is injective. Indeed, let $[A]$ lie in its kernel,
with $A\in\operatorname{GL}(\mathfrak g)$. Every point of
$G/P_\theta$ is an eigenline of $A$. Since $G/P_\theta$ is
irreducible, it lies in the projectivization of one eigenspace. The
adjoint variety spans $\mathbb P\mathfrak g$, so this eigenspace is
$\mathfrak g$. Hence $A$ is scalar. Since $K_{\min}$ is connected,
its image lies in $\operatorname{Aut}(G/P_\theta)^\circ$.

By Demazure's theorem
\cite[pp.~181--182, Th\'eor\`eme~1]{Demazure},
\[
\operatorname{Aut}(G/P_\theta)^\circ=G
\]
except for $(C_n,P_1)$, $(B_n,P_n)$, and $(G_2,P_1)$. The adjoint
variety is $C_n/P_1$, $B_n/P_2$, or $G_2/P_2$ in types $C_n$, $B_n$,
or $G_2$, respectively. Thus only type $C$ occurs among Demazure's
exceptional pairs. The case $B_2=C_2$ is the same excluded
symplectic minimal-orbit case. Outside type $C$, we have
\[
G\subseteq K\subseteq K_{\min}\subseteq
\operatorname{Aut}(G/P_\theta)^\circ=G.
\]
Hence $K=G$.

Suppose that $\mathfrak g=\mathfrak{sp}(V)$, where $\dim V=2n$ and
$n\geq2$. Under $\mathfrak{sp}(V)\cong\Sym^2V$, the minimal
projective orbit is the Veronese variety
\[
\nu_2(\mathbb P V)\subset\mathbb P(\Sym^2V),
\]
and $K_{\min}=\operatorname{PGL}(V)$. Therefore
\[
\operatorname{PSp}(V)\subseteq K\subseteq\operatorname{PGL}(V).
\]
Since
\[
\mathfrak{sl}(V)=\mathfrak{sp}(V)\oplus\Lambda^2_0V,
\qquad
\Lambda^2_0V=\ker\bigl(\Lambda^2V\xrightarrow{\,\omega\,}\C\bigr),
\]
and $\Lambda^2_0V$ is an irreducible $\operatorname{Sp}(V)$-module,
the Lie algebra of $K$ is either $\mathfrak{sp}(V)$ or
$\mathfrak{sl}(V)$. Since $K$ is connected, it is
$\operatorname{PSp}(V)$ or $\operatorname{PGL}(V)$. The second case
would make $\mathbb P\overline{\mathcal O}$ invariant under
$\operatorname{PGL}(V)$. By Lemma \ref{lem:sp-pgl}, this would imply
$\mathcal O=\mathcal O_{\min}$, contrary to the hypothesis. Hence
$K=\operatorname{PSp}(V)=G$.
\end{proof}

\section{Fiber dilation and fixed-point spaces.}\label{sec:fiber}

Let $\mathcal O$ be a nonzero nilpotent orbit of a complex simple Lie
algebra $\mathfrak g$. Suppose that
\[
\overline{\mathcal O}\cong(T^*X)^{\aff}
\]
for a smooth quasi-affine variety $X$.
The following lemma is well-known. We include the proof for
completeness.

\begin{lemma}\label{lem:small-boundary}
Let $Z$ be a smooth quasi-affine variety. If
$\Gamma(Z,\mathcal O_Z)$ is finitely generated, then the canonical map
\[
Z\longrightarrow Z^{\aff}
\]
is an open immersion and
\[
\operatorname{codim}_{Z^{\aff}}(Z^{\aff}\setminus Z)\geq2.
\]
\end{lemma}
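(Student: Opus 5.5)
We prove the two assertions in turn. Throughout, put $A=\Gamma(Z,\mathcal O_Z)$, a finitely generated $\C$-algebra, so that $Z^{\aff}=\Spec A$ is an affine variety of finite type. Let $j:Z\to Z^{\aff}$ be the canonical morphism.

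\emph{Step 1: $j$ is an open immersion.} Since $Z$ is quasi-affine, there is an open immersion $\iota:Z\hookrightarrow Y$ into some affine variety $Y$. By the standard characterization of quasi-affine schemes (EGA~II, 5.1.2), the functions in $\Gamma(Z,\mathcal O_Z)$ separate points and give local coordinates. Concretely, the open sets $Z_f=\{f\neq0\}$ with $f\in A$ cover $Z$ and are affine with $\Gamma(Z_f,\mathcal O)=A_f$. Indeed, restriction of functions from $Y$ gives $f\in A$ with $Z_f=Y_f\subset Z$ covering $Z$, and for these $\Gamma(Z_f,\mathcal O)=A_f$ because $Z$ is quasi-compact and separated. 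Hence $j$ restricted to $Z_f$ is the isomorphism $Z_f\cong\Spec A_f=D(f)\subset Z^{\aff}$. So $j$ is a local isomorphism onto the open set $\bigcup D(f)$. It is injective because each point of $Z$ lies in some $Z_f$, and $j^{-1}(D(f))=Z_f$. Hence $j$ is an open immersion. Finite generation of $A$ is what guarantees that $Z^{\aff}$ is a variety at all; the open-immersion argument itself is the usual one.

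\emph{Step 2: the boundary has codimension at least $2$.} Let $B=Z^{\aff}\setminus Z$, a closed subset. Suppose $B$ had an irreducible component $D$ of codimension $1$. The variety $Z^{\aff}$ is normal, since $A=\Gamma(Z,\mathcal O_Z)$ is an intersection of the normal local rings of the smooth (hence normal) integral $Z$ inside its function field. Therefore the local ring $\mathcal O_{Z^{\aff},D}$ is a DVR with valuation $v_D$. By prime avoidance, pick $g\in A$ vanishing on $D$ but not identically on the other components of $B$. After shrinking to an affine open $U=\Spec A_h$ of $Z^{\aff}$ meeting $D$ and with $U\cap B=U\cap D$ (choose $h$ vanishing on the other components of $B$ but not on $D$), the rational function $1/g$ is regular on $U\setminus D\subset Z$. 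Now use that $Z\cap D(h)$ is covered by affines and that $A_h=\Gamma(Z_h,\mathcal O)$ (localization commutes with global sections on quasi-compact separated $Z$). Then $1/g\in\Gamma(Z_h,\mathcal O)$, provided $g$ is chosen with zero locus inside $B$ near $D$. Then $1/g\in A_h$, so $g$ is a unit on $U$, which contradicts $g$ vanishing on $D\cap U\neq\emptyset$.

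To make this cleanest, take instead a uniformizer-type function. By normality, a height-one prime $\mathfrak p$ of $A_h$ corresponding to $D$ is locally principal after further localization, $\mathfrak pA_{hk}=(g)$. Then $V(g)\cap U=D\cap U\subset B$, so $1/g$ is regular on $Z\cap U=Z_{hk}$, giving $1/g\in A_{hk}$, which is a contradiction. This localization bookkeeping is the main obstacle; the key input is again $\Gamma(Z_s,\mathcal O)=A_s$ for $s\in A$.
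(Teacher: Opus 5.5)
Your proof is correct. Step 1 is the standard argument behind the lemma the paper simply cites: a quasi-affine scheme maps by an open immersion to the spectrum of its ring of global sections. Your normality argument for $A$ is also the paper's. It tacitly uses Step 1 to get $\operatorname{Frac}(A)=K(Z)$, since $Z$ is a nonempty open subset of the irreducible scheme $\Spec A$.

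The genuine difference is the codimension bound. The paper obtains it from Grosshans's Theorem 4.2, while you prove it directly. Your second argument is the one that works. Normality makes $A_{\mathfrak p}$ a DVR, so $\mathfrak pA_{hk}=(g)$ for suitable $h,k\notin\mathfrak p$. Then $Z_{hk}$ is the complement of $V(g)$ in $\Spec A_{hk}$, so $g$ is invertible on $Z_{hk}$. The identity $\Gamma(Z_{hk},\mathcal O_Z)=A_{hk}$ then makes $g$ a unit of $A_{hk}$, contradicting $g\in\mathfrak pA_{hk}$. Your first attempt, with $g$ chosen only by prime avoidance, does not give regularity of $1/g$ on $U\setminus D$, because $V(g)$ may meet $Z\cap U$; you should drop it.

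Your route is self-contained and shows exactly where each hypothesis enters. Normality makes the height-one prime locally principal. The identity $\Gamma(Z_s,\mathcal O_Z)=A_s$, valid for quasi-compact separated $Z$, produces the contradiction. Finite generation is needed only so that $\Spec A$ is a Noetherian variety in which codimension and the DVR property make sense. The paper's route is shorter because it delegates the codimension statement to Grosshans, at the cost of treating it as a black box.
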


\begin{proof}
The canonical map identifies $Z$ with an open subvariety of
$Z^{\aff}$ by \cite[Lemma 28.19.4]{Stacks}.
Since $Z$ is smooth, it is normal. Hence
$\Gamma(Z,\mathcal O_Z)$ is a normal domain, so $Z^{\aff}$ is an
irreducible normal affine variety.
By construction, $Z$ and $Z^{\aff}$ have the same ring of regular
functions. The codimension statement follows from
\cite[Theorem 4.2]{Grosshans}.
\end{proof}
Fiber dilation on $T^*X$ is
\[
t\cdot(x,\xi)=(x,t\xi).
\]
Under the assumed isomorphism, this induces a $\Gm$-action $\sigma$ on
$\overline{\mathcal O}$. By Lemma \ref{lem:intrinsic-vertex}, the
action $\sigma$ fixes the vertex $0$. Let
\[
\rho=d_0\sigma:\Gm\longrightarrow\operatorname{GL}(\mathfrak g)
\]
be its tangent representation at $0$. We use the tangent representation
in the sense of \cite[Section 5]{Jia}. By \cite[Lemma 4.5]{Jia}, the
total space $T^*X$ is smooth and quasi-affine. Put
\[
A=\Gamma(T^*X,\mathcal O_{T^*X}).
\]
The algebra $A$ is finitely generated because
$\Spec A\cong\overline{\mathcal O}$. By Lemma
\ref{lem:small-boundary}, $T^*X$ is a dense $\Gm$-stable open
subvariety of $(T^*X)^{\aff}$. In particular,
\[
\dim\mathcal O=2\dim X>0.
\]
Fiber dilation on $T^*X$ is therefore nontrivial, and hence so is the
induced action $\sigma$.
By \cite[Lemma 5.1]{Jia}, its tangent representation $\rho$ is
nontrivial.
Moreover, fiber dilation gives the nonnegative grading
\[
A=\bigoplus_{d\geq0}A_d,
\qquad
A_0=\Gamma(X,\mathcal O_X).
\]
Let $\mathfrak m\subset A$ be the maximal ideal of the vertex.
It is homogeneous because the vertex is fixed.
Choose homogeneous elements $f_1,\ldots,f_N\in\mathfrak m$ whose
classes $\bar f_1,\ldots,\bar f_N$ form a basis of
$\mathfrak m/\mathfrak m^2$, with $\deg f_i=d_i\geq0$, and let
$v_1,\ldots,v_N$ be the dual basis.
By the definition of the tangent representation in \cite[Section 5]{Jia},
\[
\rho(t)v_i=t^{d_i}v_i.
\]
Thus $\rho$ has only nonnegative weights.
By \cite[Lemma 5.2]{Jia}, the subgroup $\rho(\Gm)$ preserves
$\overline{\mathcal O}\subset\mathfrak g$.
By \cite[Lemma 5.4]{Jia}, the fixed-point subscheme of $\sigma$ is
$X^{\aff}=\Spec A_0$.
Let $x_0\in X^{\aff}$ correspond to the vertex, and let
$\mathfrak m_0=\mathfrak m\cap A_0$ be its maximal ideal. Put
\[
C_{x_0}(X^{\aff})
\coloneqq
\Spec\operatorname{gr}_{\mathfrak m_0}A_0,
\]
the tangent cone of $X^{\aff}$ at $x_0$.

\begin{lemma}\label{lem:fixed-tangent-cone}
There is an isomorphism of affine schemes
\[
C_{x_0}(X^{\aff})
\cong
\overline{\mathcal O}\cap
\mathfrak g^{\rho(\Gm)}.
\]
In particular,
\[
\dim\bigl(\overline{\mathcal O}\cap\mathfrak g^{\rho(\Gm)}\bigr)
=\dim X=\frac12\dim\mathcal O.
\]
\end{lemma}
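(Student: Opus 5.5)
The plan is to compare two gradings on the same ring $A\cong\C[\overline{\mathcal O}]$: the fiber-dilation grading $A=\bigoplus_{d\geq0}A_d$ coming from $\sigma$, and the $\mathfrak m$-adic filtration at the vertex. Since $\overline{\mathcal O}\subset\mathfrak g$ is a cone with vertex $0$, its coordinate ring is standard graded, and $\mathfrak m$ is its irrelevant ideal. The ideal of $\overline{\mathcal O}$ in $\Sym(\mathfrak g^*)$ is homogeneous, so it equals its own ideal of initial forms. Hence
\[
\operatorname{gr}_{\mathfrak m}A\cong\Sym(\mathfrak g^*)/I(\overline{\mathcal O}),
\]
that is, $\Spec\operatorname{gr}_{\mathfrak m}A\cong\overline{\mathcal O}$, the tangent cone of $\overline{\mathcal O}$ at $0$. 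Here $\operatorname{gr}^1=\mathfrak m/\mathfrak m^2\cong\mathfrak g^*$.

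\emph{Step 1.} Since $\mathfrak m$ is $\sigma$-homogeneous, $\sigma$ preserves each $\mathfrak m^k$ and induces a $\Gm$-action on $\operatorname{gr}_{\mathfrak m}A$.
\begin{itemize}
\item On $\operatorname{gr}^1=\mathfrak m/\mathfrak m^2$ this action is, by the definition recalled above, dual to $\rho$: the classes $\bar f_i$ have weights $d_i$.
\item Since $\operatorname{gr}_{\mathfrak m}A$ is generated by $\operatorname{gr}^1$, the induced action on $\Spec\operatorname{gr}_{\mathfrak m}A=\overline{\mathcal O}$ is exactly the restriction of the linear action $\rho$ on $\mathfrak g$. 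This is consistent with \cite[Lemma 5.2]{Jia}.
\end{itemize}
So $\operatorname{gr}_{\mathfrak m}A=B=\bigoplus_{d\geq0}B_d$ is $\rho$-graded with nonnegative weights.

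\emph{Step 2.} Compute the fixed-point scheme. The scheme $\overline{\mathcal O}\cap\mathfrak g^{\rho(\Gm)}$ is cut out in $\overline{\mathcal O}$ by the linear coordinates of nonzero weight, i.e.\ by the $\bar f_i$ with $d_i>0$. Because all weights are nonnegative and $B$ is generated by the $\bar f_i$, the ideal they generate is $B_{>0}B=\bigoplus_{d>0}B_d$. Hence
\[
\overline{\mathcal O}\cap\mathfrak g^{\rho(\Gm)}\cong\Spec B_0 .
\]

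\emph{Step 3.} Identify $B_0$ with $\operatorname{gr}_{\mathfrak m_0}A_0$. Since the filtration is by homogeneous ideals,
\[
B_0=\bigoplus_k(\mathfrak m^k\cap A_0)/(\mathfrak m^{k+1}\cap A_0).
\]
It therefore suffices to show $\mathfrak m^k\cap A_0=\mathfrak m_0^k$. One inclusion is clear. For the other, note $\mathfrak m=\mathfrak m_0\oplus A_{>0}$, so $\mathfrak m^k$ is spanned by products of $k$ homogeneous elements of $\mathfrak m$. Because degrees are nonnegative, such a product has degree $0$ only if every factor lies in $\mathfrak m\cap A_0=\mathfrak m_0$. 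Taking degree-$0$ components gives $\mathfrak m^k\cap A_0=\mathfrak m_0^k$, and this yields the claimed isomorphism $C_{x_0}(X^{\aff})\cong\overline{\mathcal O}\cap\mathfrak g^{\rho(\Gm)}$.

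\emph{Step 4.} Dimension count. The tangent cone of $A_0$ at $x_0$ has dimension $\dim X^{\aff}$, since $A_0$ is a domain. The ring $A_0=A/A_{>0}A$ is finitely generated, so Lemma \ref{lem:small-boundary} applied to $Z=X$ shows that $X\to X^{\aff}$ is an open immersion. Hence $\dim X^{\aff}=\dim X=\frac12\dim\mathcal O$, using $\dim\mathcal O=2\dim X$ from above.

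I expect the main obstacle to be Step 1. One must check carefully that the action induced by $\sigma$ on the associated graded ring coincides with the linear action $\rho$ restricted to the cone $\overline{\mathcal O}$. This uses both that $\overline{\mathcal O}$ equals its tangent cone at $0$ and that the tangent representation in \cite[Section 5]{Jia} is defined via $\mathfrak m/\mathfrak m^2$. A second point needing care is that "intersection with $\mathfrak g^{\rho(\Gm)}$" is taken scheme-theoretically, which is exactly what Step 2 computes.
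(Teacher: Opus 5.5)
Your proposal is correct and follows essentially the same route as the paper. Both arguments use the identity $(\mathfrak m^k)_0=\mathfrak m_0^k$ to get $\operatorname{gr}_{\mathfrak m_0}A_0\cong(\operatorname{gr}_{\mathfrak m}A)_0$, identify the tangent cone at the vertex with $\overline{\mathcal O}$ and its fixed-point scheme with $\Spec$ of the degree-zero part, and finish with the dimension count via Lemma \ref{lem:small-boundary}; the only differences are that you prove inline what the paper cites as \cite[Lemmas 5.2 and 5.4]{Jia}, and you get finite generation of $A_0$ from $A_0\cong A/A_{>0}$ rather than from the reductivity of $\Gm$.
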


\begin{proof}
Since the grading is nonnegative,
\[
(\mathfrak m^q)_0=\mathfrak m_0^q.
\]
Hence
\[
\operatorname{gr}_{\mathfrak m_0}A_0
\cong
\bigl(\operatorname{gr}_{\mathfrak m}A\bigr)_0.
\]
Combining this with \cite[Lemmas 5.2 and 5.4]{Jia}, we obtain an
isomorphism of affine schemes
\[
C_{x_0}(X^{\aff})
\cong
\overline{\mathcal O}\cap
\mathfrak g^{\rho(\Gm)}.
\]
Indeed, Lemma 5.2 identifies $C_0\overline{\mathcal O}$ with
$\overline{\mathcal O}$ as a $\rho$-stable closed subscheme of
$\mathfrak g$, while Lemma 5.4 identifies its fixed-point subscheme
with $\Spec((\operatorname{gr}_{\mathfrak m}A)_0)$.
The fixed-point subscheme of a $\rho$-stable closed subscheme of
$\mathfrak g$ is its scheme-theoretic intersection with
$\mathfrak g^{\rho(\Gm)}$.

The algebra $A_0=A^{\Gm}$ is finitely generated because $\Gm$ is
reductive. By Lemma \ref{lem:small-boundary}, $X$ is a dense open
subvariety of $X^{\aff}$. Since $A_0\subset A$ and $A$ is a domain,
$X^{\aff}$ is irreducible. The tangent cone has the same dimension as
the local ring at $x_0$. Therefore
\[
\dim C_{x_0}(X^{\aff})
=\dim X^{\aff}
=\dim X.
\]
Therefore
\[
\dim\bigl(\overline{\mathcal O}\cap\mathfrak g^{\rho(\Gm)}\bigr)
=\dim X
=\frac12\dim\mathcal O.
\qedhere
\]
\end{proof}

Let $G$ be the adjoint group of $\mathfrak g$. Fix a Cartan subalgebra
$\mathfrak h\subset\mathfrak g$, and let $\Phi$ be its root system.
Fix simple roots $\alpha_1,\ldots,\alpha_n$, and write the highest root as
\[
\theta=\sum_{i=1}^n m_i\alpha_i.
\]
Let $\omega_1^\vee,\ldots,\omega_n^\vee$ be the fundamental
coweights. For a nonempty set $I\subset\{1,\ldots,n\}$, put
\[
\mathfrak g_I=
\bigoplus_{\substack{\beta=\sum b_j\alpha_j\in\Phi\\
b_i=m_i\text{ for every }i\in I}}
\mathfrak g_\beta.
\]

\begin{lemma}\label{lem:top-root}
Assume that the image of $\rho$ in
$\operatorname{PGL}(\mathfrak g)$ is contained in the adjoint group
$G$.
Then, up to conjugation by $G$, there exist an integer $c>0$ and a
dominant cocharacter
\[
\mu=\sum_i a_i\omega_i^\vee,
\qquad a_i\geq0,
\]
such that
\[
\rho(t)=t^c\Ad(\mu(t)^{-1}),
\qquad
c=\langle\theta,\mu\rangle,
\qquad
\mathfrak g^{\rho(\Gm)}=\mathfrak g_I,
\]
where $I=\{i:a_i>0\}$ is nonempty.
\end{lemma}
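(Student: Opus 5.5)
The plan is to pin down $\rho$ from three facts: its image in $\operatorname{PGL}(\mathfrak g)$ lies in $G$, it is nontrivial, and it has only nonnegative weights. Write $\bar\rho$ for the composite $\Gm\to\operatorname{GL}(\mathfrak g)\to\operatorname{PGL}(\mathfrak g)$. By hypothesis $\bar\rho$ lands in $G\subset\operatorname{PGL}(\mathfrak g)$, so it is a cocharacter of the adjoint group $G$. Its image is a torus, hence lies in a maximal torus. After conjugating by $G$ we may take this to be the maximal torus $T$ with Lie algebra $\mathfrak h$. Since $G$ is adjoint, its cocharacter lattice is the coweight lattice. So $\bar\rho=\nu^{-1}$ for some coweight $\nu$, and conjugating further by the Weyl group we may assume $\nu=\mu=\sum a_i\omega_i^\vee$ is dominant with $a_i\ge0$ integers.

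Next I lift back to $\operatorname{GL}(\mathfrak g)$. Both $\rho(t)$ and $\Ad(\mu(t)^{-1})$ map to the same element of $\operatorname{PGL}(\mathfrak g)$, so $\rho(t)=\chi(t)\Ad(\mu(t)^{-1})$ for a scalar function $\chi(t)$. Because $\rho$ and $\Ad\circ\mu^{-1}$ are homomorphisms into $\operatorname{GL}(\mathfrak g)$ and scalars are central, $\chi$ is a character of $\Gm$, so $\chi(t)=t^c$ for some $c\in\mathbb Z$.

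The weights of $\rho$ are then $c-\langle\beta,\mu\rangle$ for $\beta\in\Phi$, together with $c$ on $\mathfrak h$. By dominance the smallest of these is $c-\langle\theta,\mu\rangle$, and the largest is $c-\langle-\theta,\mu\rangle=c+\langle\theta,\mu\rangle$. Nonnegativity of the weights forces $c\ge\langle\theta,\mu\rangle$.

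The main step, and the one I expect to be the real obstacle, is to upgrade this to the equality $c=\langle\theta,\mu\rangle$. The inequality comes for free, but equality should say that $\rho$ has a zero weight on the vertex tangent directions. I would argue as follows. If $c>\langle\theta,\mu\rangle$, all weights of $\rho$ on $\mathfrak g$ are strictly positive, so $\mathfrak g^{\rho(\Gm)}=0$. Lemma \ref{lem:fixed-tangent-cone} would then give $\dim X=0$, contradicting $\dim\mathcal O=2\dim X>0$. Hence $c=\langle\theta,\mu\rangle$.

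Nontriviality of $\rho$ then says $\mu\neq0$. Otherwise $c=0$ and $\rho$ would be trivial, contrary to \cite[Lemma 5.1]{Jia} as used above. So $I=\{i:a_i>0\}$ is nonempty, and $c=\sum a_im_i>0$ since every $m_i\ge1$.

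Finally I compute the fixed space. On $\mathfrak h$ the weight is $c>0$, so $\mathfrak h$ contributes nothing. On $\mathfrak g_\beta$ the weight is $\langle\theta-\beta,\mu\rangle=\sum_i a_i(m_i-b_i)$. Each term is $\ge0$ because $b_i\le m_i$ for every root $\beta=\sum b_j\alpha_j$: indeed $\theta-\beta$ is a nonnegative combination of simple roots, which covers negative roots as well. So the weight vanishes exactly when $b_i=m_i$ for all $i\in I$. Hence $\mathfrak g^{\rho(\Gm)}=\mathfrak g_I$.
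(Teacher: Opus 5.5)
Your proposal is correct and follows essentially the same route as the paper. You conjugate $\bar\rho$ to an anti-dominant cocharacter, split off the scalar character $t^c$, and use nonnegativity to get $c\geq\langle\theta,\mu\rangle$. Then Lemma~\ref{lem:fixed-tangent-cone} (the fixed space is nonzero) and nontriviality of $\rho$ give $c=\langle\theta,\mu\rangle>0$, and $\mathfrak g^{\rho(\Gm)}=\mathfrak g_I$ follows from $\sum_i a_i(m_i-b_i)=0$. The only difference is the order of the steps: you get the equality first, since $c>\langle\theta,\mu\rangle$ would make every weight positive, while the paper first proves $c>0$ and then exhibits a fixed root space.
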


\begin{proof}
This proof is almost identical to that of
\cite[Lemma 3.3]{Jia}.
Let $\overline\rho:\Gm\to\operatorname{PGL}(\mathfrak g)$ be the
projectivization of $\rho$.
Applying a suitable conjugation by an element of $G$, we may assume that
$\overline\rho$ is anti-dominant. Thus
\[
\overline\rho(t)=\Ad(\mu(t)^{-1})
\]
for a dominant cocharacter
\[
\mu=\sum_i a_i\omega_i^\vee,
\qquad
a_i\in\mathbb Z_{\geq0}.
\]
The map $\rho(t)\Ad(\mu(t))$ is scalar and depends homomorphically on
$t$. Hence these scalars form a character of $\Gm$, and for some
$c\in\mathbb Z$ we have
\[
\rho(t)=t^c\Ad(\mu(t)^{-1}).
\]
The weight of $\rho$ on $\mathfrak h$ is $c$, and its weight on
$\mathfrak g_\beta$ is $c-\langle\beta,\mu\rangle$. Since all these
weights are nonnegative and $\mu$ is dominant, we have
$c\geq\langle\theta,\mu\rangle\geq0$.

We first show that $c>0$. If $c=0$, then
$\langle\theta,\mu\rangle=0$. Every coefficient $m_i$ is positive and
every $a_i$ is nonnegative, so $\mu=0$. But $\rho$ is nontrivial, a
contradiction. Thus $\mathfrak h$ contains no fixed vector.
By Lemma \ref{lem:fixed-tangent-cone},
\[
\dim\bigl(\overline{\mathcal O}\cap
\mathfrak g^{\rho(\Gm)}\bigr)=\dim X>0.
\]
Thus $\mathfrak g^{\rho(\Gm)}\ne0$, so some root space
$\mathfrak g_\beta$ is fixed. Then
$\langle\beta,\mu\rangle=c>0$, so $\beta$ is a positive root.
It follows that
\[
c=\langle\beta,\mu\rangle
\leq\langle\theta,\mu\rangle\leq c,
\]
and hence $c=\langle\theta,\mu\rangle$.

Put $I=\{i:a_i>0\}$. The set $I$ is nonempty, since $\mu=0$ would
again imply $c=0$ and make $\rho$ trivial. We now prove the two
inclusions separately.

First take a root $\beta=\sum_i b_i\alpha_i$ for which
$\mathfrak g_\beta$ is fixed. Since $\theta$ is the highest root, we have
$b_i\leq m_i$ for every $i$. Moreover,
\[
0=\langle\theta-\beta,\mu\rangle
=\sum_i a_i(m_i-b_i).
\]
Every summand is nonnegative. Thus $b_i=m_i$ whenever $a_i>0$, or
equivalently, whenever $i\in I$. Since $\mathfrak h$ has no fixed
vector, this proves
$\mathfrak g^{\rho(\Gm)}\subseteq\mathfrak g_I$.

Conversely, take a root $\beta=\sum_i b_i\alpha_i$ such that
$b_i=m_i$ for every $i\in I$. If $i\notin I$, then $a_i=0$. Therefore
\[
\langle\theta-\beta,\mu\rangle
=\sum_i a_i(m_i-b_i)=0.
\]
Hence $\langle\beta,\mu\rangle=\langle\theta,\mu\rangle=c$, so
$\mathfrak g_\beta$ is fixed by $\rho$. This proves
$\mathfrak g_I\subseteq\mathfrak g^{\rho(\Gm)}$ and completes the proof.
\end{proof}

\begin{lemma}\label{lem:strict-bound}
We have
\[
\dim\mathfrak g^{\rho(\Gm)}\geq\dim X+1.
\]
\end{lemma}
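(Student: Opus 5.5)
The plan is to show that $\dim\mathfrak g^{\rho(\Gm)}$ equals the embedding dimension of $X^{\aff}$ at $x_0$. Then equality $\dim\mathfrak g^{\rho(\Gm)}=\dim X$ would force $X^{\aff}$ to be smooth at $x_0$. From this we will derive that $\overline{\mathcal O}$ is smooth at its vertex, which contradicts Lemma~\ref{lem:intrinsic-vertex}. The inequality $\dim\mathfrak g^{\rho(\Gm)}\geq\dim X$ itself is immediate from Lemma~\ref{lem:fixed-tangent-cone}, because $\overline{\mathcal O}\cap\mathfrak g^{\rho(\Gm)}$ is a closed subscheme of $\mathfrak g^{\rho(\Gm)}$ of dimension $\dim X$. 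So the content of the lemma is to rule out equality.

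\emph{Step 1: identify the fixed space.} By the description of $\rho$ via the dual basis $v_i$, the space $\mathfrak g^{\rho(\Gm)}$ is dual to the degree-zero part of $\mathfrak m/\mathfrak m^2$, namely $\mathfrak m_0/(\mathfrak m^2)_0$. Since the grading is nonnegative, we have $(\mathfrak m^2)_0=\mathfrak m_0^2$, as already used in the proof of Lemma~\ref{lem:fixed-tangent-cone}. Hence
\[
\dim\mathfrak g^{\rho(\Gm)}=\dim\mathfrak m_0/\mathfrak m_0^2,
\]
which is the embedding dimension of $X^{\aff}$ at $x_0$. Suppose for contradiction that this equals $\dim X=\dim X^{\aff}$. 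Then the local ring of $X^{\aff}$ at $x_0$ is regular, so there is a smooth affine open neighbourhood $U\subset X^{\aff}$ of $x_0$, which we may take to be a basic open set $D(h)$ with $h\in A_0$.

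\emph{Step 2: the cotangent bundle extends over $U$.} By Lemma~\ref{lem:small-boundary}, $X\cap U$ is open in $U$ with complement of codimension at least $2$. Since $U$ is smooth, each $\Sym^d T_U$ is locally free. Moreover $T_U$ restricts to $T_X$ on $X\cap U$, so Hartogs' extension over a codimension-two subset gives
\[
\Gamma(X\cap U,\Sym^dT_X)=\Gamma(U,\Sym^dT_U).
\]
Localizing the graded algebra $A=\bigoplus_d\Gamma(X,\Sym^dT_X)$ at $h$ yields $A_h=\bigoplus_d\Gamma(X\cap U,\Sym^dT_X)=\Gamma(T^*U,\mathcal O)$. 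Because $U$ is affine, this says that the open subset $D(h)\subset\overline{\mathcal O}$ is isomorphic to $T^*U$, which is smooth.

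\emph{Step 3: conclude.} The vertex lies over $x_0\in U$, so it lies in $D(h)$, and therefore $\overline{\mathcal O}$ is smooth at $0$. Hence $\dim T_0\overline{\mathcal O}=\dim\mathcal O$. But $\mathcal O$ is a nilpotent orbit, so $\dim\mathcal O<\dim\mathfrak g$, whereas Lemma~\ref{lem:intrinsic-vertex} gives $\dim T_0\overline{\mathcal O}=\dim\mathfrak g$. This contradiction shows $\dim\mathfrak g^{\rho(\Gm)}\geq\dim X+1$.

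The main obstacle is Step 2: checking carefully that localization of $A$ at a degree-zero element $h$ corresponds to restricting over $D(h)\subset X^{\aff}$, and that the Hartogs extension applies degree by degree. The latter uses that $U$ is normal and that $\Sym^dT_U$ is locally free, hence reflexive. An alternative for Step 1 is to argue directly with the tangent cone: if it were an affine space of dimension $\dim X$, then $\operatorname{gr}_{\mathfrak m_0}A_0$ would be a polynomial ring and $A_0$ would be regular at $x_0$. The embedding-dimension route above avoids questions about reducedness of the tangent cone.
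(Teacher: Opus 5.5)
Your proof is correct, and its overall strategy matches the paper's. You identify $\mathfrak g^{\rho(\Gm)}$ with $T_{x_0}X^{\aff}$; you get this directly from the dual basis and $(\mathfrak m^2)_0=\mathfrak m_0^2$, where the paper cites \cite[Lemma 5.4]{Jia}. You then note that equality makes $x_0$ a smooth point. Finally you exhibit a smooth open piece of $\overline{\mathcal O}$ containing the vertex, contradicting $T_0\overline{\mathcal O}=\mathfrak g$.

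The real difference is in how that open piece is produced. The paper takes the whole regular locus $U=(X^{\aff})_{\mathrm{reg}}$ and extends functions across the codimension-two complement of $T^*X$ in $T^*U$ to get $\Gamma(T^*U,\OO)=A$. It then applies Lemma~\ref{lem:small-boundary} a second time, which uses quasi-affineness of $T^*U$ and the Grosshans input, to get an open immersion $T^*U\to\overline{\mathcal O}$. After that it still has to check, via $\mathfrak m=\mathfrak m_0\oplus\bigoplus_{d>0}A_d$, that the zero covector over $x_0$ goes to the vertex.

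You instead localize at a single $h\in A_0$ with $h(x_0)\neq0$. You compute $A_h=\Gamma(T^*D(h),\OO)$ directly from two facts. First, $\Gamma(X,\Sym^dT_X)_h=\Gamma(X\cap D(h),\Sym^dT_X)$, which is valid because $X$ is quasi-compact and separated. Second, Hartogs extension holds for the locally free sheaves $\Sym^dT_U$. This gives an explicit affine open $D(h)\cong T^*D(h)$ in $\overline{\mathcal O}$. The vertex lies in it simply because $h\notin\mathfrak m_0=\mathfrak m\cap A_0$.

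Your version is more hands-on and avoids reusing the open-immersion lemma. The paper's version is more global and reuses machinery it has already set up, and it avoids the localization lemma.
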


\begin{proof}
By Lemma \ref{lem:fixed-tangent-cone},
\[
\dim\mathfrak g^{\rho(\Gm)}
\geq
\dim\bigl(\overline{\mathcal O}\cap\mathfrak g^{\rho(\Gm)}\bigr)
=\dim X.
\]
Suppose, to the contrary, that equality holds. By
\cite[Lemma 5.4]{Jia},
\[
T_{x_0}(X^{\aff})
=(T_0\overline{\mathcal O})^{\rho(\Gm)}
=\mathfrak g^{\rho(\Gm)}.
\]
By Lemma \ref{lem:small-boundary}, $X$ is a dense open subvariety of
$X^{\aff}$. Thus $\dim X^{\aff}=\dim X$, and $x_0$ is smooth in
$X^{\aff}$.

Put $U=(X^{\aff})_{\mathrm{reg}}$. Since $X$ is smooth and open in
$X^{\aff}$, we have $X\subset U$. By Lemma \ref{lem:small-boundary},
\[
\operatorname{codim}_U(U\setminus X)\geq2.
\]
Since $T^*U\to U$ is a vector bundle and $T^*X=T^*U|_X$, we have
\[
\operatorname{codim}_{T^*U}(T^*U\setminus T^*X)
=\operatorname{codim}_U(U\setminus X)\geq2.
\]
Since $T^*U$ is normal, regular functions extend across subsets of
codimension at least two. Hence
\[
\Gamma(T^*U,\mathcal O_{T^*U})
=
\Gamma(T^*X,\mathcal O_{T^*X})=A.
\]
Since $T^*U$ is smooth and quasi-affine, Lemma
\ref{lem:small-boundary} shows that the canonical map
\[
T^*U\longrightarrow\Spec A=\overline{\mathcal O}
\]
is an open immersion.

Let $z_0\in T^*_{x_0}U$ be the zero covector. We have
\[
\mathfrak m=\mathfrak m_0\oplus\bigoplus_{d>0}A_d.
\]
The restriction isomorphism
$\Gamma(T^*U,\mathcal O_{T^*U})\cong A$ is $\Gm$-equivariant. Hence
the grading on $A$ is also the fiber grading for $T^*U$.
Elements of $\mathfrak m_0$ vanish at $x_0$, while every
positive-degree function vanishes on the zero section. Thus $z_0$
maps to the vertex $0$. By Lemma \ref{lem:intrinsic-vertex},
\[
T_0\overline{\mathcal O}=\mathfrak g.
\]
Since $\overline{\mathcal O}$ is a proper subvariety of $\mathfrak g$,
the vertex $0$ is singular. An open immersion cannot send the smooth
point $z_0$ to a singular point. This is a contradiction.
\end{proof}

\section{Classification in classical types.}\label{sec:classical}

Let $\mathfrak g$ be of classical type, and let $V$ be its standard
representation. Suppose that
\[
\overline{\mathcal O}\cong(T^*X)^{\aff}
\]
for a smooth quasi-affine variety $X$. Let
$\rho:\Gm\to\operatorname{GL}(\mathfrak g)$ be the tangent
representation induced by fiber dilation.
By Lemmas \ref{lem:fixed-tangent-cone} and \ref{lem:strict-bound},
\begin{equation}\label{eq:fixed-dimensions}
\dim(\overline{\mathcal O}\cap \mathfrak g^{\rho(\Gm)})
=\frac12\dim\mathcal O,
\qquad
\dim \mathfrak g^{\rho(\Gm)}\geq\frac12\dim\mathcal O+1.
\end{equation}

Except in the minimal-orbit case for
$\mathfrak g\simeq\mathfrak{sp}_{2n}$ with $n\geq2$, Proposition
\ref{prop:projective-stabilizer} shows that the projective image of
$\rho$ lies in $G$. Lemma \ref{lem:top-root} therefore applies.
We use its notation $c$, $\mu$, and $I$, so that
\begin{equation}\label{eq:rho-classical}
\rho(t)=t^c\Ad(\mu(t)^{-1}),
\qquad
c=\langle\theta,\mu\rangle,
\qquad
\mathfrak g^{\rho(\Gm)}=\mathfrak g_I.
\end{equation}
After precomposing $\rho$ by a suitable map $t\mapsto t^d$ with $d>0$, we may
lift $\mu$ to a cocharacter $\widetilde{\mu}$ of the classical group
acting on $V$. This replaces $c$ and $\mu$ by $dc$ and $d\mu$ and does
not change $\mathfrak g^{\rho(\Gm)}$. So we still use the same
notations $\rho$, $c$, and $\mu$ after this replacement, and always
assume that $\mu$ lifts to $\widetilde{\mu}$.

\subsection{Type A}\label{sec:typeA}

\begin{proposition}\label{prop:typeA}
Let $\mathfrak g=\mathfrak{sl}(V)$, where $\dim V=n$, and let
$\mathcal O_\lambda$ be a nonzero nilpotent orbit. If
\[
\overline{\mathcal O_\lambda}\cong(T^*X)^{\aff}
\]
for a smooth quasi-affine variety $X$, then
there are integers $p,q,r$ with
\[
p+q=n,
\qquad
1\leq r<p\leq q,
\qquad
\lambda=(2^r,1^{n-2r}).
\]
\end{proposition}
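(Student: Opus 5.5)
We use the setup of \eqref{eq:rho-classical}: Proposition \ref{prop:projective-stabilizer} applies in type $A$, so Lemma \ref{lem:top-root} holds. The highest root of $A_{n-1}$ has all $m_i=1$. So $\mathfrak g_I$ is spanned by the root spaces $\mathfrak g_{\epsilon_a-\epsilon_b}$ with $a\le\min I<$ and $\max I<b$.

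\emph{Step 1: reduce to a single index.} Let $i_1=\min I$ and $i_2=\max I$. Then $\mathfrak g_I=\operatorname{Hom}(V/V_{i_2},V_{i_1})$, where $V_k=\langle e_1,\dots,e_k\rangle$. Every element of $\mathfrak g_I$ has square zero, so $\mathfrak g_I$ is an abelian subalgebra of $p\times q'$ matrices, with $p=i_1$, $q'=n-i_2$, and $p+q'\le n$.

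\emph{Step 2: compute the intersection.} The orbit closure $\overline{\mathcal O_\lambda}$ is cut out by rank conditions on powers: $\rk x^k\le\sum_{j>k}\lambda^t_j$. For $x\in\mathfrak g_I$ we have $x^2=0$, so
\[
\overline{\mathcal O_\lambda}\cap\mathfrak g_I=\{A\in\Mat_{p\times q'}:\rk A\le r\},
\qquad r=\min\bigl(\lambda^t_2,\,p,\,q'\bigr),
\]
where $\lambda^t_2$ is the number of parts of $\lambda$ of size at least $2$. Both the set-theoretic identification and the dimension are standard. Write $r_0=\lambda^t_2$. This gives $\dim=r(p+q'-r)$.

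\emph{Step 3: impose \eqref{eq:fixed-dimensions}.} Put $s=\min(r_0,p,q')$. The two conditions read
\[
s(p+q'-s)=\tfrac12\dim\mathcal O_\lambda,
\qquad
pq'\ge \tfrac12\dim\mathcal O_\lambda+1.
\]
We bound $\dim\mathcal O_\lambda$ from below in terms of $r_0$: $\dim\mathcal O_\lambda\ge \dim\mathcal O_{(2^{r_0},1^{n-2r_0})}=2r_0(n-r_0)$, with equality iff $\lambda=(2^{r_0},1^{n-2r_0})$. Since $\dim \mathcal O$ strictly increases along the dominance order, and $\lambda$ dominates $(2^{r_0},1^{\dots})$ with the same number of parts $\ge2$, the inequality is strict unless $\lambda_1=2$.

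Next we compare with $s(p+q'-s)\le s(n-s)\le r_0(n-r_0)$. The last step uses $s\le r_0$ and $s\le n/2$, since $s\le\min(p,q')$ and $p+q'\le n$, so $x(n-x)$ is increasing there. Hence equality forces three things: $\lambda=(2^{r_0},1^{n-2r_0})$; $p+q'=n$ (if $s>0$); and $s=r_0$, i.e.\ $r:=r_0\le\min(p,q')$. Up to the outer automorphism or transpose symmetry, we may take $p\le q$ with $q=q'$.

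The strict inequality $pq\ge r(n-r)+1$ excludes $r=p$, because $pq=p(n-p)$. Therefore $r<p\le q$.

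The main obstacle is Step 2 together with the dimension comparison in Step 3. We must justify that the intersection is exactly the determinantal variety and handle the monotonicity of $\dim\mathcal O_\lambda$ carefully. For the latter, we use the formula $\dim\mathcal O_\lambda=n^2-\sum(\lambda^t_j)^2$ directly: with $\lambda^t_1+\lambda^t_2+\dots=n$ and $\lambda^t_2=r_0$ fixed, $\sum_j(\lambda^t_j)^2$ is maximized, uniquely, when $\lambda^t=(n-r_0,r_0)$.
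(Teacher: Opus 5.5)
\paragraph{A wrong rank bound in Step 2.} Your Step 2 states the wrong rank bound, and Step 3 depends on it. For a nilpotent $x$ of Jordan type $\lambda$ one has $\rk x^k=\sum_{j>k}\lambda^t_j$. So for square-zero $x$ the only effective closure condition is
\[
\rk x\le\lambda^t_2+\lambda^t_3+\cdots=n-\ell(\lambda),
\]
where $\ell(\lambda)$ is the number of parts. Equivalently, $(2^t,1^{n-2t})\le\lambda$ if and only if $t\le n-\ell(\lambda)$. This agrees with your condition $\rk x\le\lambda^t_2$ only when $\lambda_1\le2$.

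For example, take $n=4$, $\lambda=(3,1)$ and $I=\{2\}$. Then $\mathfrak g_I\cong\Mat_{2\times2}$, and every rank-two element has type $(2,2)\le(3,1)$. So $\overline{\mathcal O_\lambda}\cap\mathfrak g_I=\mathfrak g_I$ is $4$-dimensional, whereas your formula gives the rank-$\le1$ locus, of dimension $3$.

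Hence the equation $s(p+q'-s)=\frac12\dim\mathcal O_\lambda$ that opens Step 3 is unjustified exactly when $\lambda_1\ge3$. That is the very case your estimate ``$\dim\mathcal O_\lambda\ge2r_0(n-r_0)$, strict unless $\lambda_1=2$'' is meant to exclude. As written, the exclusion of $\lambda_1\ge3$ rests on a false description of the fixed-point intersection.

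\paragraph{The repair.} Use the paper's choice of $r$: the largest $t\le\min(p,q')$ with $(2^t,1^{n-2t})\le\lambda$, that is, $r=\min(n-\ell(\lambda),p,q')$. Then the intersection is the rank-$\le r$ locus. Compare through the orbit $\mathcal O_{(2^r,1^{n-2r})}\subseteq\overline{\mathcal O_\lambda}$:
\[
r(p+q'-r)\le r(n-r)=\tfrac12\dim\mathcal O_{(2^r,1^{n-2r})}\le\tfrac12\dim\mathcal O_\lambda.
\]
Equality forces $p+q'=n$ and $\lambda=(2^r,1^{n-2r})$. After that, your exclusion of $r=\min(p,q')$ via $pq\ge r(n-r)+1$ goes through unchanged. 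No outer automorphism is needed there; just set $p=\min(p,q')$ and $q=\max(p,q')$.

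Simply replacing $\lambda^t_2$ by $n-\ell(\lambda)$ in your chain does not work. The quantity $n-\ell(\lambda)$ can exceed $n/2$ (e.g.\ $\lambda=(n)$), where $x(n-x)$ is decreasing. The comparison has to use the actual $r\le\min(p,q')\le n/2$, and your transpose-partition maximization then becomes unnecessary.

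Your root-theoretic Step 1 is correct. It is a slightly more direct substitute for the paper's lift of $\mu$ to $\widetilde\mu$.
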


\begin{proof}
Use the notation $\rho$, $c$, $\mu$, and $\widetilde{\mu}$ introduced
above. Write
\[
V=\bigoplus_{a\in\mathbb Z}V_a.
\]
Let $a_{\max}$ and $a_{\min}$ be the extreme weights, and put
\[
P=V_{a_{\max}},\qquad Q=V_{a_{\min}},\qquad
p_0=\dim P,\qquad q_0=\dim Q.
\]

The $\operatorname{Ad}(\widetilde{\mu})$-weight of
$\operatorname{Hom}(V_b,V_a)$ is $a-b$. Its largest value is therefore
$a_{\max}-a_{\min}$. Since $\mu$ is dominant, this value is
$\langle\theta,\mu\rangle$. Thus
\begin{equation}\label{eq:c-typeA}
c=a_{\max}-a_{\min}.
\end{equation}
The $\rho$-weight on $\operatorname{Hom}(V_b,V_a)$ is $c-a+b$.
By \eqref{eq:c-typeA}, it is zero exactly when
$(a,b)=(a_{\max},a_{\min})$. Hence
\[
\mathfrak g^{\rho(\Gm)}=\operatorname{Hom}(Q,P).
\]
Here each map is extended by zero on the other weight spaces. The
cocharacter $\mu$ is nontrivial, so $a_{\max}>a_{\min}$. In
particular, $P\cap Q=0$ and $p_0+q_0\leq n$.

Every $A\in\operatorname{Hom}(Q,P)$ satisfies $A^2=0$. If $A$ has
rank $t$, then its Jordan partition is
\[
\lambda_t=(2^t,1^{n-2t}).
\]
Let $r$ be the largest integer such that
\[
\lambda_r\leq\lambda,\qquad r\leq\min(p_0,q_0),
\]
where $\leq$ is the dominance order. The intersection in
\eqref{eq:fixed-dimensions} has positive dimension, so $r\geq1$.
The type $A$ closure order \cite{KP79} shows that its closed points are
precisely
\[
\{A\in\operatorname{Hom}(Q,P):\rk A\leq r\}.
\]
Indeed, an element of rank $t$ belongs to
$\overline{\mathcal O_\lambda}$ if and only if
$\lambda_t\leq\lambda$.

This determinantal variety has dimension $r(p_0+q_0-r)$. Moreover,
$\dim\mathcal O_{\lambda_r}=2r(n-r)$ \cite{KP79}. Therefore
\[
\begin{aligned}
\frac12\dim\mathcal O_\lambda
&=r(p_0+q_0-r)\\
&\leq r(n-r)
=\frac12\dim\mathcal O_{\lambda_r}
\leq\frac12\dim\mathcal O_\lambda.
\end{aligned}
\]
The first equality follows from \eqref{eq:fixed-dimensions}. The last
inequality follows from $\lambda_r\leq\lambda$. Thus equality holds
throughout. Since $r>0$, we obtain $p_0+q_0=n$. We also have
$\mathcal O_{\lambda_r}\subseteq\overline{\mathcal O_\lambda}$ and
the two orbits have the same dimension. Hence $\lambda=\lambda_r$.

If $r=\min(p_0,q_0)$, the rank condition above is vacuous. The
intersection would then have the same dimension as
$\mathfrak g^{\rho(\Gm)}$, contrary to
\eqref{eq:fixed-dimensions}. Therefore $r<\min(p_0,q_0)$. Set
\[
p=\min(p_0,q_0),\qquad q=\max(p_0,q_0).
\]
Then
\[
p+q=n,\qquad 1\leq r<p\leq q,\qquad
\lambda=(2^r,1^{n-2r}).\qedhere
\]
\end{proof}

\subsection{Type B}\label{sec:typeB}

\begin{proposition}\label{prop:typeB}
Let $\mathfrak g=\mathfrak{so}(V)$, where $\dim V=2n+1\geq7$, and
let $\mathcal O_\lambda$ be a nonzero nilpotent orbit. If
\[
\overline{\mathcal O_\lambda}\cong(T^*X)^{\aff}
\]
for a smooth quasi-affine variety $X$, then
\[
\lambda=(2^2,1^{2n-3}),
\qquad\text{i.e.}\qquad
\mathcal O_\lambda=\mathcal O_{\min}.
\]
\end{proposition}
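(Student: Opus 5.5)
The plan is to follow the type A argument: compute the fixed space $\mathfrak g^{\rho(\Gm)}=\mathfrak g_I$ concretely in terms of the weight decomposition of $V$, and then compare dimensions using \eqref{eq:fixed-dimensions}. Since $\mathfrak g$ is of type $B_n$ with $n\geq3$, Proposition \ref{prop:projective-stabilizer} applies, so \eqref{eq:rho-classical} holds. We may also assume that $\mu$ lifts to a cocharacter $\widetilde\mu$ of $\operatorname{SO}(V)$.

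Write $V=\bigoplus_a V_a$. Because $\widetilde\mu$ preserves the form, $V_a$ pairs perfectly with $V_{-a}$, and $V_0$ is nondegenerate. Identify $\mathfrak{so}(V)\cong\Lambda^2V$, so that $\Lambda^2$-components have weights $a+b$. Let $m>0$ be the largest weight and $P=V_m$, which is isotropic; let $m'$ be the next weight, so $m'\geq0$, since $\dim V$ is odd and hence $V_0\neq0$ or $m'>0$. The top weight $c=\langle\theta,\mu\rangle$ of $\Lambda^2V$ is $2m$ if $\dim P\geq2$, and $m+m'$ if $\dim P=1$. This gives two cases.

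\emph{Case 1: $\dim P=p_0\geq2$.} Here $\mathfrak g^{\rho(\Gm)}=\Lambda^2P$. Since $P$ is isotropic, each element of $\Lambda^2P$ of rank $2k$ is square-zero with partition $(2^{2k},1^{2n+1-4k})$. Arguing with the closure order \cite{KP79} exactly as in type A, the intersection in \eqref{eq:fixed-dimensions} is the skew determinantal variety of rank $\leq2r$ in $\Lambda^2P$. Its dimension is $r(2p_0-2r-1)$, where $\lambda_r=(2^{2r},1^{2n+1-4r})\leq\lambda$. Since $P$ is isotropic, $p_0\leq n$. On the other hand, $\frac12\dim\mathcal O_{\lambda_r}=r(2n-2r)$. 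Hence
\[
r(2p_0-2r-1)\leq r(2n-2r-1)<r(2n-2r)=\tfrac12\dim\mathcal O_{\lambda_r}\leq\tfrac12\dim\mathcal O_\lambda ,
\]
which contradicts the first relation in \eqref{eq:fixed-dimensions}. So Case 1 cannot occur.

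\emph{Case 2: $\dim P=1$, say $P=\langle e\rangle$.} Here $\mathfrak g^{\rho(\Gm)}=e\wedge V_{m'}$, which has dimension $s=\dim V_{m'}$.
\begin{itemize}
\item If $m'>0$, then $V_{m'}$ is isotropic and orthogonal to $e$. So every nonzero $e\wedge w$ lies in $\mathcal O_{\min}$, with partition $(2^2,1^{2n-3})$. The intersection is then the whole fixed space, which contradicts the strict bound of Lemma \ref{lem:strict-bound}.
\item If $m'=0$, then $e\wedge w$ with $w\in V_0$ has partition $(3,1^{2n-2})$ when $(w,w)\neq0$, and $(2^2,1^{2n-3})$ when $(w,w)=0$, $w\neq0$. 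If $\overline{\mathcal O_\lambda}$ contains $\mathcal O_{(3,1^{2n-2})}$, the intersection is again all of $e\wedge V_0$, giving the same contradiction. Otherwise the intersection is the quadric cone in $V_0$, of dimension $s-1\leq 2n-2$. Meanwhile $\lambda$ does not dominate $(3,1^{2n-2})$, so $\lambda=(2^{2k},1^{2n+1-4k})$ with $k\geq1$. Then \eqref{eq:fixed-dimensions} forces $k(2n-2k)\leq2n-2$. For $n\geq3$ and $2\leq k\leq\lfloor(2n+1)/4\rfloor$ we have $k(2n-2k)\geq 2(2n-4)>2n-2$ (this uses $n\geq3$), so only $k=1$ survives. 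This is the minimal orbit, as claimed.
\end{itemize}

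The main obstacle I expect is the bookkeeping of the Jordan types of elements of the fixed space, and justifying via the closure order in type B that the intersection is exactly the stated determinantal variety or quadric. Both need care at the level of closed points, which suffices for the dimension count.
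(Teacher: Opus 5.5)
Your proposal is correct and follows essentially the same route as the paper: the same split by $\dim V_{a_{\max}}$, the same Pfaffian dimension count in $\Lambda^2P$ (your bound $p_0\leq n$ plays the role of the paper's parity argument $2p\neq N$), and the same analysis of $e\wedge V_{m'}$ via $\overline{\mathcal O}_{\min}$ and the orbit with partition $(3,1^{2n-2})$. The one real variation is the last step: the paper simply compares $\frac12\dim\mathcal O_\lambda=\dim V_0-1=N-3$ with $\frac12\dim\mathcal O_{\min}$ and uses $\mathcal O_{\min}\subseteq\overline{\mathcal O_\lambda}$, which avoids both your classification of the $\lambda$ not dominating $(3,1^{2n-2})$ and your arithmetic. (Incidentally, $2(2n-4)>2n-2$ actually needs $n\geq4$; this is harmless because no $k\geq2$ exists when $n=3$. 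Also, the type $B$ closure order should be cited as \cite{KP}, not \cite{KP79}.)
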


\begin{proof}
Use the notation $\rho$, $c$, $\mu$, and $\widetilde{\mu}$ introduced
above. Put $N=2n+1$, and write
\[
V=\bigoplus_{a\in\mathbb Z}V_a.
\]
The symmetric form pairs $V_a$ with $V_{-a}$. Let $a_{\max}>0$ be the
largest weight, and put
\[
W=V_{a_{\max}},\qquad p=\dim W.
\]
Then $W$ is isotropic. We identify $\mathfrak{so}(V)$ with
$\Lambda^2V$, where $u\wedge v$ acts by
\[
x\longmapsto (u,x)v-(v,x)u.
\]

We first show that $p=1$. Suppose that $p\geq2$. Then
\[
c=2a_{\max},
\qquad
\mathfrak g^{\rho(\Gm)}=\Lambda^2W.
\]
Every element of $\Lambda^2W$ is square-zero. An element of rank $2t$
has partition
\[
\lambda_t=(2^{2t},1^{N-4t}).
\]
Let $r$ be the largest integer such that
\[
\lambda_r\leq\lambda,
\qquad
2r\leq p.
\]
By \eqref{eq:fixed-dimensions}, we have $r\geq1$. The orthogonal
closure order \cite{KP} shows that the closed points of
$\overline{\mathcal O_\lambda}\cap\mathfrak g^{\rho(\Gm)}$
are precisely the elements of $\Lambda^2W$ of rank at most $2r$.
Hence
\[
\begin{aligned}
\frac12\dim\mathcal O_\lambda
&=r(2p-2r-1)\\
&\leq r(N-2r-1)
=\frac12\dim\mathcal O_{\lambda_r}\\
&\leq\frac12\dim\mathcal O_\lambda.
\end{aligned}
\]
Thus all inequalities are equalities. Since $r>0$, we have $2p=N$,
contrary to the parity of $N$. Therefore $p=1$.

Let $b<a_{\max}$ be the largest weight with $V_b\ne0$. We claim that
$b\geq0$. Suppose that $b<0$. Then $-b$ is a weight. The choice of
$b$ forces $-b=a_{\max}$. Any weight smaller than $b$ would have a
negative larger than $a_{\max}$. Thus the only weights are
$a_{\max}$ and $-a_{\max}$, both with multiplicity one. Hence $N=2$,
a contradiction. Therefore $b\geq0$. Since $\Lambda^2W=0$,
\[
c=a_{\max}+b,
\qquad
\mathfrak g^{\rho(\Gm)}=W\otimes V_b.
\]

We next show that $b=0$. Suppose that $b>0$. Every nonzero element of
$W\otimes V_b$ is square-zero of rank two. Hence
\[
\mathfrak g^{\rho(\Gm)}
\subseteq\overline{\mathcal O}_{\min}
\subseteq\overline{\mathcal O_\lambda},
\]
contrary to \eqref{eq:fixed-dimensions}. Thus $b=0$.

Put $U=V_0$. Any negative weight other than $-a_{\max}$ would
have a positive opposite weight strictly between $0$ and $a_{\max}$.
Thus $U$ is nondegenerate, $\dim U=N-2$, and
\[
\mathfrak g^{\rho(\Gm)}=W\otimes U.
\]
Fix $0\ne w\in W$. Every element has the form $A=w\wedge u$, and
\[
A^2(x)=-(u,u)(w,x)w.
\]
For $u\ne0$, the map $A$ has rank two. If $(u,u)=0$, then $A^2=0$
and $A$ has partition $(2^2,1^{N-4})$. If $(u,u)\ne0$, then $A^2$
has rank one and $A^3=0$, so $A$ has partition $(3,1^{N-3})$.

We claim that every closed point $w\wedge u$ of
$\overline{\mathcal O_\lambda}\cap\mathfrak g^{\rho(\Gm)}$
satisfies $(u,u)=0$. Suppose otherwise. Then the intersection contains
an element with partition
\[
\lambda_3=(3,1^{N-3}).
\]
Since $\overline{\mathcal O_\lambda}$ is $\operatorname{SO}(V)$-stable,
\[
\mathcal O_{\lambda_3}\subseteq\overline{\mathcal O_\lambda},
\qquad
\lambda_3\leq\lambda.
\]
Every element of $\mathfrak g^{\rho(\Gm)}$ is zero or has partition
$(2^2,1^{N-4})$ or $\lambda_3$. The minimal orbit closure is contained
in every nonzero nilpotent orbit closure. Therefore
\[
\mathfrak g^{\rho(\Gm)}\subseteq\overline{\mathcal O_\lambda},
\]
contrary to \eqref{eq:fixed-dimensions}. This proves the claim.

Since
$\overline{\mathcal O}_{\min}\subseteq\overline{\mathcal O_\lambda}$,
the closed points of
$\overline{\mathcal O_\lambda}\cap\mathfrak g^{\rho(\Gm)}$ are
exactly
\[
\{w\wedge u:(u,u)=0\}.
\]
This cone has dimension $N-3$. Hence
\[
\frac12\dim\mathcal O_\lambda=N-3
=\frac12\dim\mathcal O_{\min}.
\]
Since $\mathcal O_{\min}\subseteq\overline{\mathcal O_\lambda}$ and
the two orbits have the same dimension,
\[
\mathcal O_\lambda=\mathcal O_{\min}.
\qedhere
\]
\end{proof}

\subsection{Type C}\label{sec:typeC}

\begin{proposition}\label{prop:typeC}
Let $\mathfrak g=\mathfrak{sp}(V)$, where $\dim V=2n$ and $n\geq2$,
and let $\mathcal O_\lambda$ be a nonzero nilpotent orbit. If
\[
\overline{\mathcal O_\lambda}\cong(T^*X)^{\aff}
\]
for a smooth quasi-affine variety $X$, then
\[
\lambda=(2^r,1^{2n-2r}),
\qquad
1\leq r\leq n-1.
\]
\end{proposition}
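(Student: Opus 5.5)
The minimal orbit $\lambda=(2,1^{2n-2})$ is the case $r=1$ of the statement, so there is nothing to prove for it. This matters because Lemma \ref{lem:top-root} is not available there. For a non-minimal orbit, Proposition \ref{prop:projective-stabilizer} applies and we may use \eqref{eq:rho-classical}, with $\mu$ lifted to $\widetilde\mu$ acting on $V$.

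Write $V=\bigoplus_a V_a$. The symplectic form pairs $V_a$ with $V_{-a}$. Let $a_{\max}>0$ be the top weight, put $W=V_{a_{\max}}$ and $p=\dim W$; then $W$ is isotropic. Identify $\mathfrak{sp}(V)\cong\Sym^2V$ as in Lemma \ref{lem:sp-pgl}. The $\operatorname{Ad}(\widetilde\mu)$-weights on $\Sym^2V$ are the sums $a+b$ of weights of $V$. Since $2a_{\max}$ is the largest such sum, $c=\langle\theta,\mu\rangle=2a_{\max}$. So, in contrast with type $B$ where $\Lambda^2W$ could vanish, here
\[
\mathfrak g^{\rho(\Gm)}=\Sym^2W .
\]
Every element of $\Sym^2W$ has image in $W$ and kernel containing $W^\perp\supseteq W$, so it squares to zero. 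One of rank $t$ has Jordan type $\lambda_t=(2^t,1^{2n-2t})$, and $p\leq n$.

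Now run the type $A$ argument. Let $r$ be the largest integer with $\lambda_r\leq\lambda$ and $r\leq p$. By \eqref{eq:fixed-dimensions} the intersection has positive dimension, so $r\geq1$. By the symplectic closure order \cite{KP}, the closed points of $\overline{\mathcal O_\lambda}\cap\Sym^2W$ are exactly the forms of rank at most $r$. This variety has dimension $rp-\binom r2$. Also $\dim\mathcal O_{\lambda_r}=2rn-r^2+r$, which is twice $rn-\binom r2$. Hence
\[
\tfrac12\dim\mathcal O_\lambda=rp-\tbinom r2\leq rn-\tbinom r2=\tfrac12\dim\mathcal O_{\lambda_r}\leq\tfrac12\dim\mathcal O_\lambda .
\]
So equality holds throughout. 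This gives $p=n$, and since $\mathcal O_{\lambda_r}\subseteq\overline{\mathcal O_\lambda}$ has the same dimension, $\lambda=\lambda_r$.

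Finally, suppose $r=p=n$. Then the rank condition is vacuous, the intersection is all of $\mathfrak g^{\rho(\Gm)}$, and this contradicts the strict inequality in \eqref{eq:fixed-dimensions}. Therefore $1\leq r\leq n-1$.

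The main obstacle is the dimension bookkeeping: checking the formula for $\dim\mathcal O_{(2^r,1^{2n-2r})}$ and the dimension of the symmetric determinantal variety. I would verify both through the Lagrangian-type description: the rank-$\leq r$ forms in $\Sym^2W$ with $\dim W=n$ have dimension exactly half the orbit dimension.
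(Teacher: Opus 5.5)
Your proposal is correct and follows the paper's proof essentially step by step. It handles the minimal orbit separately, identifies $\mathfrak g^{\rho(\Gm)}=\Sym^2W$ with $c=2a_{\max}$, chooses $r$ via the symplectic closure order, uses the same dimension chain to force $p=n$ and $\lambda=\lambda_r$, and excludes $r=n$ through the strict inequality in \eqref{eq:fixed-dimensions}.
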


\begin{proof}
If $\mathcal O_\lambda=\mathcal O_{\min}$, the conclusion holds with
$r=1$. Assume from now on that
$\mathcal O_\lambda\ne\mathcal O_{\min}$. Then
Lemma \ref{lem:top-root} and \eqref{eq:rho-classical} apply. Use the
notation $\rho$, $c$, $\mu$, and $\widetilde{\mu}$ introduced above,
and write
\[
V=\bigoplus_{a\in\mathbb Z}V_a.
\]
The symplectic form pairs $V_a$ with $V_{-a}$. Let $a_{\max}>0$ be
the largest weight, and put
\[
W=V_{a_{\max}},\qquad p=\dim W.
\]
Then $W$ is isotropic and $p\leq n$.

We realize $\mathfrak{sp}(V)$ as $\Sym^2V$, where $u\odot v$ acts by
\[
x\longmapsto \omega(u,x)v+\omega(v,x)u.
\]
The $\operatorname{Ad}(\widetilde{\mu})$-weight on
$V_a\otimes V_b$ is $a+b$. Its largest value is $2a_{\max}$, which is
$\langle\theta,\mu\rangle$ because $\mu$ is dominant. Hence
\begin{equation}\label{eq:c-typeC}
c=2a_{\max}.
\end{equation}
The corresponding $\rho$-weight is $c-a-b$. By
\eqref{eq:c-typeC}, it is zero exactly when
$a=b=a_{\max}$. Therefore
\[
\mathfrak g^{\rho(\Gm)}=\Sym^2W.
\]
Every element of $\Sym^2W$ has image in $W$ and vanishes on $W$.
It is therefore square-zero. An element of rank $t$ has partition
\[
\lambda_t=(2^t,1^{2n-2t}).
\]

Let $r$ be the largest integer such that
\[
\lambda_r\leq\lambda,\qquad r\leq p.
\]
By \eqref{eq:fixed-dimensions}, we have $r\geq1$. The symplectic
closure order \cite{KP} shows that the closed points of
$\overline{\mathcal O_\lambda}\cap\mathfrak g^{\rho(\Gm)}$ are
precisely
\[
\{A\in\Sym^2W:\rk A\leq r\}.
\]
Indeed, a rank-$t$ element belongs to
$\overline{\mathcal O_\lambda}$ if and only if
$\lambda_t\leq\lambda$.

This symmetric determinantal variety has dimension
$r(2p-r+1)/2$, while
$\dim\mathcal O_{\lambda_r}=r(2n-r+1)$ \cite{KP}. Hence
\[
\begin{aligned}
\frac12\dim\mathcal O_\lambda
&=\frac{r(2p-r+1)}2\\
&\leq\frac{r(2n-r+1)}2
=\frac12\dim\mathcal O_{\lambda_r}
\leq\frac12\dim\mathcal O_\lambda.
\end{aligned}
\]
The first equality follows from \eqref{eq:fixed-dimensions}. The last
inequality follows from $\lambda_r\leq\lambda$. Thus equality holds
throughout. Since $r>0$, we obtain $p=n$. Moreover,
$\mathcal O_{\lambda_r}\subseteq\overline{\mathcal O_\lambda}$ and
the two orbits have the same dimension. Hence
$\lambda=\lambda_r$.

If $r=n$, the rank condition above is vacuous. The intersection would
then have the same dimension as $\mathfrak g^{\rho(\Gm)}$, contrary
to \eqref{eq:fixed-dimensions}. Therefore
\[
\lambda=(2^r,1^{2n-2r}),\qquad 1\leq r\leq n-1.\qedhere
\]
\end{proof}

\subsection{Type D}
\label{sec:typeD}

If $\lambda$ is very even, $\mathcal O_\lambda$ denotes
either of the two $\operatorname{SO}(V)$-orbits with Jordan partition
$\lambda$.

\begin{proposition}\label{prop:typeD}
Let $\mathfrak g=\mathfrak{so}(V)$, where $\dim V=2n\geq8$, and let
$\mathcal O_\lambda$ be a nonzero nilpotent orbit. If
\[
\overline{\mathcal O_\lambda}\cong(T^*X)^{\aff}
\]
for a smooth quasi-affine variety $X$, then
\[
\lambda=(2^{2r},1^{2n-4r}),
\qquad
1\leq r\leq\lfloor(n-2)/2\rfloor.
\]
\end{proposition}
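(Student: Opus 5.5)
We follow the pattern of Propositions \ref{prop:typeB} and \ref{prop:typeC}. Since $\dim V=2n\geq8$, Proposition \ref{prop:projective-stabilizer} applies, so \eqref{eq:rho-classical} holds with $\mu$ lifted to $\widetilde\mu$. Write $V=\bigoplus_a V_a$. The symmetric form pairs $V_a$ with $V_{-a}$. Let $a_{\max}>0$ be the largest weight, and put $W=V_{a_{\max}}$ and $p=\dim W$; then $W$ is isotropic and $p\leq n$. Identify $\mathfrak{so}(V)\cong\Lambda^2V$. The $\operatorname{Ad}(\widetilde\mu)$-weight on $V_a\wedge V_b$ is $a+b$, and $c=\langle\theta,\mu\rangle$ is its maximum. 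We split into the cases $p\geq2$ and $p=1$.

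\emph{Case $p\geq2$.} Here $c=2a_{\max}$ and $\mathfrak g^{\rho(\Gm)}=\Lambda^2W$. Every element of $\Lambda^2W$ is square-zero, and an element of rank $2t$ has partition $\lambda_t=(2^{2t},1^{2n-4t})$. When $4t=2n$, this partition is very even and the elements of $\Lambda^2W$ lie in only one of the two orbits, namely the one determined by the family of $W$. Let $r$ be maximal with $\lambda_r\leq\lambda$, $2r\leq p$, and, in the very even case, with the correct component. By \eqref{eq:fixed-dimensions} we have $r\geq1$. The closure order of \cite{KP} identifies the closed points of the intersection with the elements of $\Lambda^2W$ of rank at most $2r$. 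This gives the chain
\[
\tfrac12\dim\mathcal O_\lambda=r(2p-2r-1)\leq r(2n-2r-1)=\tfrac12\dim\mathcal O_{\lambda_r}\leq\tfrac12\dim\mathcal O_\lambda,
\]
so $p=n$ and $\lambda=\lambda_r$. Next, \eqref{eq:fixed-dimensions} forbids the rank condition from being vacuous, so $2r<2\lfloor n/2\rfloor$, i.e.\ $2r\leq n-2$ (using the parity of $2r$ when $n$ is odd). This yields exactly $1\leq r\leq\lfloor(n-2)/2\rfloor$.

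\emph{Case $p=1$.} Repeat the type B argument. Let $b$ be the next weight after $a_{\max}$. If $b<0$, the only weights are $\pm a_{\max}$, so $2n=2$, which is absurd. If $b>0$, then $\mathfrak g^{\rho(\Gm)}=W\otimes V_b$ consists of square-zero rank-two elements and lies in $\overline{\mathcal O}_{\min}$, contradicting \eqref{eq:fixed-dimensions}. Hence $b=0$, $U=V_0$ is nondegenerate of dimension $2n-2$, and $\mathfrak g^{\rho(\Gm)}=W\otimes U$. The same partition analysis as in type B, elements $(2^2,1^{2n-4})$ when $(u,u)=0$ and $(3,1^{2n-3})$ otherwise, together with the strict bound, forces the intersection to be $\{w\wedge u:(u,u)=0\}$. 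This cone has dimension $2n-3$, so $\dim\mathcal O_\lambda=2(2n-3)=\dim\mathcal O_{\min}$, giving $\lambda=(2^2,1^{2n-4})$, i.e.\ $r=1\leq\lfloor(n-2)/2\rfloor$ since $n\geq4$.

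The main obstacle is the very even bookkeeping in the case $p\geq2$: we must verify that when $2r=n$ the elements of maximal rank in $\Lambda^2W$ all lie in a single one of the two orbits, and that the closure relations used remain valid there. Since the rank-$n$ case is excluded anyway, by vacuity of the rank condition, it should suffice to note that for $2t<n$ the partition is not very even, so the standard closure order applies unambiguously. We must also check the dimension formula $\dim\mathcal O_{\lambda_r}=2r(2n-2r-1)$ from \cite{KP}.
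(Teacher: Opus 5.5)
Your proof is correct and follows the paper's argument. It uses the same split into $p\geq2$ and $p=1$, the same dimension chain forcing $p=n$ and $\lambda=\lambda_r$, the same exclusion of the case where the rank condition is vacuous, and the same reduction of $p=1$ to the type B analysis; there, as you use implicitly, $(3,1^{2n-3})$ and $(2^2,1^{2n-4})$ are not very even, so each is a single $\operatorname{SO}(V)$-orbit.

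The one difference is how the intersection is identified with the rank-$\le 2r$ locus. You use the Kraft--Procesi closure order together with a separate very-even component check. The paper instead notes that $\operatorname{GL}(W)\subset\operatorname{SO}(V)$ acts transitively on each rank stratum $R_t\subset\Lambda^2W$, and takes $r$ maximal with $R_r\subseteq\overline{\mathcal O_\lambda}$. The orbit $\operatorname{SO}(V)\cdot A_r$ then selects the correct component automatically, so the very-even bookkeeping you flagged as the main obstacle never arises.
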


\begin{proof}
Use the notation $\rho$, $c$, $\mu$, and $\widetilde{\mu}$ introduced
above. Put $N=2n$, and write
\[
V=\bigoplus_{a\in\mathbb Z}V_a.
\]
The symmetric form pairs $V_a$ with $V_{-a}$. Let $a_{\max}>0$ be the
largest weight, and put
\[
W=V_{a_{\max}},\qquad p=\dim W.
\]
Then $W$ is isotropic and $2p\leq N$. As in the proof of Proposition
\ref{prop:typeB}, we identify $\mathfrak{so}(V)$ with $\Lambda^2V$.

Suppose first that $p\geq2$. Then
for $u\in V_a$ and $v\in V_d$, the
$\operatorname{Ad}(\widetilde\mu)$-weight of $u\wedge v$ is $a+d$.
Since $\Lambda^2W\ne0$, the largest such weight is
$2a_{\max}$, and equality forces $a=d=a_{\max}$.
Since $c=\langle\theta,\mu\rangle$, it follows that
\[
c=2a_{\max},
\qquad
\mathfrak g^{\rho(\Gm)}=\Lambda^2W.
\]
Every element of $\Lambda^2W$ has image in $W$ and vanishes on $W$.
It is therefore square-zero. An element of rank $2t$ has partition
\[
\lambda_t=(2^{2t},1^{N-4t}).
\]

Put $m=\lfloor p/2\rfloor$, and for $0\leq t\leq m$ put
\[
R_t=\{A\in\Lambda^2W:\rk A=2t\}.
\]
Choose an isotropic subspace $W^-$ paired perfectly with $W$.
The subgroup $\operatorname{GL}(W)\subset\operatorname{SO}(V)$ acts
on $W^-$ by the inverse dual action and acts trivially on
$(W\oplus W^-)^\perp$. It preserves $\Lambda^2W$ and acts
transitively on each $R_t$. Hence each $R_t$ is either contained in
$\overline{\mathcal O_\lambda}$ or disjoint from it.

The intersection in \eqref{eq:fixed-dimensions} has positive
dimension, so it contains a nonzero closed point. Let $r$ be the
largest integer such that
$R_r\subseteq\overline{\mathcal O_\lambda}$. Then $r\geq1$.
The closure of $R_r$ in $\Lambda^2W$ is the Pfaffian variety
\[
D_r=\{A\in\Lambda^2W:\rk A\leq2r\}.
\]
Since $\overline{\mathcal O_\lambda}$ is closed, we have
$D_r\subseteq\overline{\mathcal O_\lambda}$. Conversely, let
$A$ be a closed point of
$\overline{\mathcal O_\lambda}\cap\Lambda^2W$, and write
$\rk A=2t$. Then $R_t$ meets $\overline{\mathcal O_\lambda}$, so the
transitivity above shows that
$R_t\subseteq\overline{\mathcal O_\lambda}$. The maximality of $r$
implies $t\leq r$, and hence $A\in D_r$. Thus the closed points of
$\overline{\mathcal O_\lambda}\cap\mathfrak g^{\rho(\Gm)}$ are
exactly the points of $D_r$. In particular,
\[
\begin{aligned}
\dim\bigl(\overline{\mathcal O_\lambda}
\cap\mathfrak g^{\rho(\Gm)}\bigr)
&=\dim D_r\\
&=2r(p-2r)+\binom{2r}{2}\\
&=r(2p-2r-1).
\end{aligned}
\]
Here the middle expression comes from choosing the $2r$-dimensional
image of a rank-$2r$ element and then choosing a nondegenerate
alternating tensor on that image.

Choose $A_r\in R_r$, and let
$\mathcal P_r=\operatorname{SO}(V)\cdot A_r$. This orbit has Jordan
partition $\lambda_r$. If $\lambda_r$ is very even, this definition
selects the orbit containing $A_r$ from the two possible orbits. Since
$A_r\in\overline{\mathcal O_\lambda}$ and
$\overline{\mathcal O_\lambda}$ is $\operatorname{SO}(V)$-stable, we
have
\[
\mathcal P_r\subseteq\overline{\mathcal O_\lambda}.
\]
Moreover,
$\dim\mathcal P_r=2r(N-2r-1)$ \cite{KP}. Therefore
\[
\begin{aligned}
\frac12\dim\mathcal O_\lambda
&=\dim\bigl(\overline{\mathcal O_\lambda}
\cap\mathfrak g^{\rho(\Gm)}\bigr)\\
&=r(2p-2r-1)\\
&\leq r(N-2r-1)
=\frac12\dim\mathcal P_r\\
&\leq\frac12\dim\mathcal O_\lambda.
\end{aligned}
\]
The last inequality follows from
$\mathcal P_r\subseteq\overline{\mathcal O_\lambda}$. Thus all the
inequalities are equalities. Since $r>0$, we have $2p=N$, so $p=n$.
Moreover,
$\dim\mathcal P_r=\dim\mathcal O_\lambda$. Since
$\mathcal P_r\subseteq\overline{\mathcal O_\lambda}$, we obtain
$\mathcal P_r=\mathcal O_\lambda$.

We next show that $r<m$. Suppose that $r=m$. Then $R_r$ is dense in
$\Lambda^2W$. Since
$R_r\subseteq\mathcal P_r=\mathcal O_\lambda$ and
$\overline{\mathcal O_\lambda}$ is closed, it follows that
$\Lambda^2W\subseteq\overline{\mathcal O_\lambda}$. Hence
\[
\dim\bigl(\overline{\mathcal O_\lambda}
\cap\mathfrak g^{\rho(\Gm)}\bigr)
=\dim\mathfrak g^{\rho(\Gm)},
\]
whereas \eqref{eq:fixed-dimensions} says that the right-hand side is
at least one larger than the left-hand side. This is a contradiction.
Therefore
\[
1\leq r<m=\lfloor n/2\rfloor,
\qquad
\lambda=\lambda_r=(2^{2r},1^{N-4r}).
\]
Since $r$ is an integer, the first inequality is equivalent to
$1\leq r\leq\lfloor(n-2)/2\rfloor$.
This proves the assertion when $p\geq2$.

Suppose now that $p=1$. Let $b<a_{\max}$ be the largest weight with
$V_b\ne0$. Such a weight exists because $N\geq8$. We claim that
$b\geq0$. Suppose that $b<0$. Then $-b$ is also a weight. Since
$a_{\max}$ is maximal, $-b\leq a_{\max}$. The choice of $b$ excludes
$b<-b<a_{\max}$. Hence $-b=a_{\max}$ and $b=-a_{\max}$.
There is no smaller weight, since its negative would be larger than
$a_{\max}$. Thus these are the only two weights. Since
$\dim V_{a_{\max}}=\dim V_{-a_{\max}}=1$, we obtain $N=2$, a
contradiction. Therefore $b\geq0$.

Since $\Lambda^2W=0$, the largest
$\operatorname{Ad}(\widetilde\mu)$-weight on $\Lambda^2V$ is
$a_{\max}+b$. Indeed, a nonzero wedge involving $W$ has weight at
most $a_{\max}+b$, while a wedge not involving $W$ has weight at most
$2b<a_{\max}+b$. The weight $a_{\max}+b$ occurs only on
$W\otimes V_b$. Therefore
\[
c=a_{\max}+b,
\qquad
\mathfrak g^{\rho(\Gm)}=W\otimes V_b.
\]

We next show that $b=0$. Suppose that $b>0$. Fix $0\ne w\in W$.
Every nonzero element of $W\otimes V_b$ has the form $w\wedge u$
with $0\ne u\in V_b$.
The vectors $w$ and $u$ are isotropic and orthogonal. Hence
$w\wedge u$ is square-zero of rank two and belongs to
$\mathcal O_{\min}$. Since every nonzero nilpotent orbit closure
contains $\overline{\mathcal O}_{\min}$, we obtain
\[
\mathfrak g^{\rho(\Gm)}
\subseteq\overline{\mathcal O}_{\min}
\subseteq\overline{\mathcal O_\lambda},
\]
so the intersection in \eqref{eq:fixed-dimensions} equals
$\mathfrak g^{\rho(\Gm)}$ on closed points. This is contrary to the
second inequality in \eqref{eq:fixed-dimensions}. Thus $b=0$.

Put $U=V_0$. Any negative weight other than $-a_{\max}$ would
have a positive opposite weight strictly between $0$ and $a_{\max}$.
Thus $U$ is nondegenerate, $\dim U=N-2$, and
\[
\mathfrak g^{\rho(\Gm)}=W\otimes U.
\]
Fix $0\ne w\in W$. The map $u\mapsto w\wedge u$ is a linear
isomorphism from $U$ to $W\otimes U$. For $A=w\wedge u$ we have
\[
A^2(x)=-(u,u)(w,x)w.
\]
If $u\ne0$, the vectors $w$ and $u$ are linearly independent. The
two linear forms $(w,-)$ and $(u,-)$ are also linearly independent,
so the image of $A$ is $\langle w,u\rangle$. Thus $A$ has rank two.
The formula above shows that its Jordan partition is
\[
\begin{cases}
(2^2,1^{N-4}),& (u,u)=0,\\
(3,1^{N-3}),& (u,u)\ne0.
\end{cases}
\]
Indeed, in the first case $A^2=0$, so there are two Jordan blocks of
size two. In the second case, $A^2$ has rank one and $A^3=0$, so there
is one Jordan block of size three.

We claim that every closed point $w\wedge u$ of
$\overline{\mathcal O_\lambda}\cap\mathfrak g^{\rho(\Gm)}$
satisfies $(u,u)=0$. Suppose that one such point has $(u,u)\ne0$.
The partition $(3,1^{N-3})$ is not very even, so all elements
$w\wedge u'$ with $(u',u')\ne0$ lie in the same
$\operatorname{SO}(V)$-orbit. They would therefore all belong to
$\overline{\mathcal O_\lambda}$. The locus
$\{u'\in U:(u',u')\ne0\}$ is dense in $U$. Since
$\overline{\mathcal O_\lambda}$ is closed, it would contain
$W\otimes U=\mathfrak g^{\rho(\Gm)}$, again contradicting
\eqref{eq:fixed-dimensions}. This proves the claim.

Conversely, $0\in\overline{\mathcal O_\lambda}$, and every nonzero
$w\wedge u$ with $(u,u)=0$ belongs to $\mathcal O_{\min}$.
Since
$\overline{\mathcal O}_{\min}\subseteq
\overline{\mathcal O_\lambda}$, the closed points of
$\overline{\mathcal O_\lambda}\cap\mathfrak g^{\rho(\Gm)}$ are
exactly
\[
\{w\wedge u:(u,u)=0\}.
\]
The form on $U$ is nondegenerate and $\dim U=N-2$, so this quadratic
cone has dimension $N-3$. By \eqref{eq:fixed-dimensions}, we have
\[
\frac12\dim\mathcal O_\lambda=N-3
=\frac12\dim\mathcal O_{\min}.
\]
Since $\mathcal O_{\min}\subseteq\overline{\mathcal O_\lambda}$ and
the two orbits have the same dimension, we have
\[
\lambda=(2^2,1^{N-4}),
\qquad
r=1\leq\lfloor(n-2)/2\rfloor.\qedhere
\]
\end{proof}

\subsection{Proof of the main theorem}\label{sec:mainproof}

\begin{proof}[Proof of Theorem \ref{thm:classification}]
Suppose that
\[
\overline{\mathcal O}\cong(T^*X)^{\aff}
\]
for a smooth quasi-affine variety $X$.
Fiber dilation induces a nontrivial representation $\rho$ on
$\mathfrak g$. By Section \ref{sec:fiber}, $\rho$ has only nonnegative
weights.
Propositions \ref{prop:typeA}, \ref{prop:typeB}, \ref{prop:typeC},
and \ref{prop:typeD} prove the necessity.
We now recall the constructions of Fu and Liu \cite{FuLiu}, which
realize every case in the first table and every example in the second
table.
Let $P$ be a cominuscule parabolic with Levi decomposition
$\mathfrak p=\mathfrak l\oplus\mathfrak u^+$, and set
$X=\mathcal O\cap\mathfrak u^+$.
For the cases in \cite[Table 1]{FuLiu}, the variety $X$ is a smooth
$L$-orbit. By \cite[Theorem 4.2]{FuLiu}, we have the following
isomorphism of affine varieties:
\[
(T^*X)^{\aff}\cong\overline{\mathcal O}.
\]

In type $A_{n-1}$, fix $p+q=n$ with $r<p\leq q$ and take $P=P_p$.
Then $\mathfrak u^+\cong\Mat_{p\times q}$, and $X$ is the locus of
matrices of rank $r$.
In type $B_n$ with $n\geq3$, take $P=P_1$. Then
$\mathfrak u^+\cong\C^{2n-1}$, and $X$ is the punctured isotropic cone
\[
\{0\ne v\in\C^{2n-1}:(v,v)=0\}.
\]
In type $C_n$, take $P=P_n$. Then
$\mathfrak u^+\cong\Sym^2\C^n$, and $X$ is the locus of symmetric
matrices of rank $r$. In type $D_n$, take $P=P_n$. Then
$\mathfrak u^+\cong\Lambda^2\C^n$, and for $r\geq2$, the variety $X$
is the locus of skew-symmetric matrices of rank $2r$.

For $r=1$ in type $D_n$, apply \cite[Theorem 1.4]{FuLiu} to the
irreducible Hermitian symmetric space $D_n/P_n$.
The corresponding highest weight variety is
$\operatorname{Gr}(2,n)\subset\PP(\Lambda^2\C^n)$.
Its punctured affine cone is the rank-two locus in $\Lambda^2\C^n$.
By \cite[Theorem 1.4]{FuLiu}, this is the required cotangent model.
There is also the quadratic construction from
\cite[Table 1 and Theorem 4.2]{FuLiu}: take $P=P_1$, so that
$\mathfrak u^+\cong\C^{2n-2}$, and let
\[
X=\{0\ne u\in\C^{2n-2}:(u,u)=0\}.\qedhere
\]
\end{proof}

\section{Exceptional types.}

\subsection{Nonexistence in types \texorpdfstring{$G_2$, $F_4$, and $E_8$.}{G2, F4, and E8}}\label{sec:G2F4E8}

In this section we upgrade the main theorem of \cite{Jia} to all
nonzero nilpotent orbits in types $G_2$, $F_4$, and $E_8$.

\begin{theorem}\label{thm:negative-exceptional}
Let $\mathfrak g$ be of type $G_2$, $F_4$, or $E_8$, and let
$\mathcal O\subset\mathfrak g$ be a nonzero nilpotent orbit.
There is no smooth quasi-affine variety $X$ such that
\[
\overline{\mathcal O}\cong(T^*X)^{\aff}.
\]
\end{theorem}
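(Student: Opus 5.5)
The plan is to compare the weights of the fiber-dilation action at a point of the zero section with the weights forced by Lemma \ref{lem:top-root}. The key input is that $G_2$, $F_4$ and $E_8$ have no cominuscule node: every coefficient $m_i$ of $\theta$ is at least $2$.

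\emph{Setup.} Suppose $\overline{\mathcal O}\cong(T^*X)^{\aff}$. The exceptional case of Proposition \ref{prop:projective-stabilizer} only concerns $\mathfrak{sp}_{2n}$, so the projective image of $\rho$ lies in $G$. Lemma \ref{lem:top-root} then gives, after conjugation,
\[
\rho(t)=t^c\Ad(\mu(t)^{-1}),\qquad c=\langle\theta,\mu\rangle,\qquad \mathfrak g^{\rho(\Gm)}=\mathfrak g_I .
\]
Let $\mathfrak g=\bigoplus_k\mathfrak g(k)$ be the grading by $\langle\,\cdot\,,\mu\rangle$, and put $D=c=\sum_i a_im_i$. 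Since $I\neq\emptyset$ and every $m_i\geq2$, we get $D\geq2$. By the proof of Lemma \ref{lem:top-root}, $\mathfrak g(D)=\mathfrak g_I$ is the top graded piece. The $\rho$-weight on $\mathfrak g(k)$ is $D-k\geq0$.

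\emph{Step 1: a point of the zero section inside $\mathcal O$.} By Lemma \ref{lem:small-boundary}, $T^*X$ is a dense open subset of $\overline{\mathcal O}$, so $T^*X\subseteq\mathcal O$, since $\mathcal O$ is the smooth locus of $\overline{\mathcal O}$. The zero section $X$ lies in the $\sigma$-fixed locus. It is identified, via \cite[Lemma 5.4]{Jia} and Lemma \ref{lem:fixed-tangent-cone}, with an open dense part of the fixed locus, and it sits inside $\mathfrak g^{\rho(\Gm)}=\mathfrak g_I$.

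The $\Gm$-action on $\overline{\mathcal O}$ induced by fiber dilation coincides with the linear action $\rho$ on the cone. I expect this to follow from the construction in \cite[Lemma 5.2]{Jia}, which identifies $\overline{\mathcal O}$ with its tangent cone $\rho$-equivariantly. Granting it, pick $0\neq x\in X$. At a point of the zero section, the tangent space $T_x(T^*X)=T_xX\oplus T^*_xX$ has fiber-dilation weights $0$ and $1$ only, each with multiplicity $\dim X$.

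\emph{Step 2: the weight computation in $\mathcal O$.} Compute the same representation as $T_x\mathcal O=[\mathfrak g,x]=\sum_k[\mathfrak g(k),x]$. Since $x\in\mathfrak g(D)$, the subspace $[\mathfrak g(k),x]$ lies in $\mathfrak g(D+k)$ and has $\rho$-weight $-k$. So all weights lie in $\{0,\dots,D\}$, and weight $D$ occurs if and only if $[\mathfrak g(-D),x]\neq0$.

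We show $[\mathfrak g(-D),x]\neq0$. The Killing form pairs $\mathfrak g(-D)$ perfectly with $\mathfrak g(D)$, so choose $y\in\mathfrak g(-D)$ with $\kappa(x,y)\neq0$. Let $h=d\mu(1)\in\mathfrak h$. Then
\[
\kappa([y,x],h)=\kappa(y,[x,h])=-D\,\kappa(y,x)\neq0 .
\]
Hence $[y,x]\neq0$, and $T_x\mathcal O$ carries the $\rho$-weight $D\geq2$. This contradicts Step 1, which proves the theorem.

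\emph{Main obstacle.} The delicate point is the equivariance used in Step 1. The weights of $\sigma$ on $T_x\overline{\mathcal O}$ at a nonvertex point must be compared with those of the linear action $\rho$. This needs $\sigma$ and $\rho$ to agree on $\overline{\mathcal O}$ (or at least near $x$), not merely at the vertex. If \cite[Lemma 5.2]{Jia} only gives that $\rho(\Gm)$ preserves $\overline{\mathcal O}$, I would instead work with the degeneration $\sigma\rightsquigarrow\rho$. The tangent cone is the associated graded of the $\sigma$-stable filtration, so the weight multiset of $\sigma$ on the tangent space at a generic fixed point should be upper semicontinuous along this degeneration. The weights $\{0,1\}$ would then persist, again contradicting the weight $D\geq2$.
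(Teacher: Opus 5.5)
\textbf{There is a genuine gap in Step 1.} Step 1 rests on the claim that fiber dilation $\sigma$ acts on $\overline{\mathcal O}\subset\mathfrak g$ by the linear maps $\rho(t)$. This is not available. What \cite[Lemma 5.2]{Jia} gives, and what the paper uses in Lemma \ref{lem:fixed-tangent-cone}, is weaker. The action $\rho$ is the one $\sigma$ induces on the tangent cone $C_0\overline{\mathcal O}$, i.e.\ the associated graded of $\sigma$ for the $\mathfrak m$-adic filtration. The lemma says only that $\rho$ preserves $C_0\overline{\mathcal O}=\overline{\mathcal O}$. The linear coordinates on $\mathfrak g$ need not be homogeneous for the fiber grading $A=\bigoplus_d A_d$, so $\sigma$ can be nonlinear in this embedding. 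Nothing in the paper or in \cite{Jia} shows that $\sigma$ is conjugate to a linear action.

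Without linearity, the two halves of your contradiction live at different points. You know the weights $\{0,1\}$ at points $x$ of the zero section, but nothing places such $x$ in $\mathfrak g^{\rho(\Gm)}=\mathfrak g_I$. Lemma \ref{lem:fixed-tangent-cone} identifies $\overline{\mathcal O}\cap\mathfrak g^{\rho(\Gm)}$ with the tangent cone $C_{x_0}(X^{\aff})$, not with $X^{\aff}$. Conversely, your Step 2 computation is correct, but it applies at points of $\mathcal O\cap\mathfrak g_I$, where fiber dilation tells you nothing directly. You also have not shown that $\mathcal O\cap\mathfrak g_I$ is nonempty; a priori $\overline{\mathcal O}\cap\mathfrak g_I$ could lie in the boundary. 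The ``upper semicontinuity'' fallback is a heuristic, not an argument.

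\textbf{How your route could be completed.} Three missing ingredients would do it.
(a) Use the deformation to the normal cone $\mathcal Y=\Spec\bigoplus_{q\in\mathbb Z}\mathfrak m^qs^{-q}\to\mathbb A^1$, with the $\Gm$-action coming from the fiber grading. Because $(\mathfrak m^q)_0=\mathfrak m_0^q$, its fixed-point scheme is the spectrum of the extended Rees algebra of $(A_0,\mathfrak m_0)$, hence integral. Its special fiber is $\overline{\mathcal O}$ with the action $\rho$.
(b) Produce a point $p\in\mathcal O\cap\mathfrak g_I$. Since $\mathfrak g_I$ lies in the nilradical $\mathfrak n$ of the Borel subalgebra, Spaltenstein's theorem $\dim(\mathcal O'\cap\mathfrak n)=\frac12\dim\mathcal O'$ gives, for every boundary orbit $\mathcal O'$, $\dim(\mathcal O'\cap\mathfrak g_I)<\frac12\dim\mathcal O=\dim(\overline{\mathcal O}\cap\mathfrak g_I)$.
(c) The special fiber is a Cartier divisor smooth at $p$, so $\mathcal Y$ is smooth at $p$. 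The fixed locus inside the smooth part of $\mathcal Y$ is irreducible and contains both $p$ and the zero section over $s=1$. So the tangent weights at $p$ equal those on the zero section, namely $\{0^{\dim X+1},1^{\dim X}\}$. This contradicts the weight $D\geq2$ you found in $T_p\mathcal O\subset T_p\mathcal Y$.

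Completed this way, your argument would prove more than the theorem: it forces $\langle\theta,\mu\rangle=1$, i.e.\ a cominuscule $\mu$, in every type. The paper avoids all of this local analysis. It combines $\mathfrak g^{\rho(\Gm)}=\mathfrak g_I\subseteq\mathfrak g_{\{i\}}$ with $\max_i\dim\mathfrak g_{\{i\}}=2,7,14$. It then compares this with the lower bound $\dim\mathfrak g^{\rho(\Gm)}\geq\frac12\dim\mathcal O+1\geq4,9,30$ from Lemma \ref{lem:strict-bound}.
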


\begin{proof}
Suppose that such an $X$ exists, and let
$\rho:\Gm\to\operatorname{GL}(\mathfrak g)$ be the tangent
representation induced by fiber dilation. By Section~\ref{sec:fiber},
$\rho$ is nontrivial, all of its weights are nonnegative, and
$\rho(\Gm)$ preserves
$\overline{\mathcal O}$. Lemma \ref{lem:minimal-preserved} then shows
that $\rho(\Gm)$ preserves $\overline{\mathcal O}_{\min}$.
By Proposition \ref{prop:projective-stabilizer}, its projective image
lies in the adjoint group $G$.
By Lemma \ref{lem:top-root}, we have
$\mathfrak g^{\rho(\Gm)}=\mathfrak g_I$ for some nonempty $I$.

As computed in \cite[Proposition 3.1]{Jia},
\[
\max_i\dim\mathfrak g_{\{i\}}
=
2,\ 7,\ 14
\]
in types $G_2$, $F_4$, and $E_8$, respectively.
Since $\mathfrak g_I\subseteq\mathfrak g_{\{i\}}$ for every $i\in I$,
we obtain $\dim\mathfrak g^{\rho(\Gm)}\leq2,7,14$ in the three types.
By Lemma \ref{lem:strict-bound}, on the other hand,
$\dim\mathfrak g^{\rho(\Gm)}\geq\frac12\dim\mathcal O+1$.
The dimensions of the minimal orbits are respectively $6$, $16$, and $58$
\cite[Section 8.4]{CM}. Hence for every nonzero orbit the right-hand
side is at least $4$, $9$, and $30$, respectively.
This is a contradiction.
\end{proof}

\subsection{Types \texorpdfstring{$E_6$ and $E_7$}{E6 and E7}}\label{sec:E6E7}

The minimal nilpotent orbit closures in types $E_6$ and $E_7$ admit
explicit cotangent models.
By \cite[Theorem 1.4 and Table 2]{FuLiu}, there are smooth quasi-affine
varieties $X$ such that
\[
\overline{\mathcal O}_{\min}\cong(T^*X)^{\aff}.
\]
In type $E_6$, one may take the punctured spinor cone.
In type $E_7$, one may take the punctured Cayley cone.

\begin{theorem}\label{thm:E6E7}
Let $\mathfrak g$ be of type $E_6$ or $E_7$, and let $\mathcal O$ be a
nonzero nilpotent orbit. If
\[
\overline{\mathcal O}\cong(T^*X)^{\aff}
\]
for a smooth quasi-affine variety $X$, then $\mathcal O$ is one of
\[
\mathcal O_{\min}(E_6),\qquad
\mathcal O_{\min}(E_7),\qquad
\mathcal O(2A_1)\subset\mathfrak e_7.
\]
\end{theorem}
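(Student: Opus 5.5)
We will follow the proof of Theorem~\ref{thm:negative-exceptional}. Suppose $\overline{\mathcal O}\cong(T^*X)^{\aff}$ and let $\rho$ be the tangent representation of fiber dilation. By Section~\ref{sec:fiber}, $\rho$ is nontrivial with nonnegative weights and preserves $\overline{\mathcal O}$. In types $E_6$ and $E_7$ we are not in the excluded symplectic case, so Proposition~\ref{prop:projective-stabilizer} places the projective image of $\rho$ in $G$. Lemma~\ref{lem:top-root} then gives $\mathfrak g^{\rho(\Gm)}=\mathfrak g_I$ for some nonempty $I$. Combining Lemma~\ref{lem:fixed-tangent-cone} with Lemma~\ref{lem:strict-bound}, we get
\[
\tfrac12\dim\mathcal O+1\leq\dim\mathfrak g_I\leq\max_i\dim\mathfrak g_{\{i\}}.
\]

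The first step is to compute $\max_i\dim\mathfrak g_{\{i\}}$, as in \cite[Proposition 3.1]{Jia}. Here $\mathfrak g_{\{i\}}$ is the top graded piece of the grading defined by $\omega_i^\vee$, namely the span of the root spaces with $\alpha_i$-coefficient $m_i$.
\begin{itemize}
\item For $E_6$ the maximum is attained at a cominuscule node, where $\mathfrak g_{\{i\}}=\mathfrak u^+$ has dimension $16$.
\item For $E_7$ it is attained at the cominuscule node, with dimension $27$.
\end{itemize}
The other nodes should give smaller top pieces, but this must be checked from the tables. The minimal orbits have dimensions $22$ and $34$. We therefore obtain $\dim\mathcal O\leq30$ in $E_6$ and $\dim\mathcal O\leq52$ in $E_7$.

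Next we consult the orbit tables \cite[Section 8.4]{CM}.
\begin{itemize}
\item In $E_6$ the orbits $A_1$ and $2A_1$ have dimensions $22$ and $32$. Only $\mathcal O_{\min}$ survives.
\item In $E_7$ the orbits $A_1$, $2A_1$, $(3A_1)''$ and $(3A_1)'$ have dimensions $34$, $52$, $54$ and $64$. The bound leaves $\mathcal O_{\min}$ and $\mathcal O(2A_1)$.
\end{itemize}

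The main obstacle is equality cases and non-cominuscule nodes, where the crude bound via $\mathfrak g_{\{i\}}$ might leave more orbits than expected. If a node with large $\dim\mathfrak g_{\{i\}}$ survives, I would refine the argument as in the classical cases. The plan is to identify $\mathfrak g_I$ as an $L$-module, for instance the $27$-dimensional module for $E_6$ in $\mathfrak e_7$. Then I would use that $\overline{\mathcal O}\cap\mathfrak g_I$ is a union of orbit closures of the Levi subgroup acting on $\mathfrak g_I$, with finitely many orbits, whose dimensions are known ($0,17,26,27$ for the $E_6$-cubic cone). Requiring this intersection to have dimension exactly $\frac12\dim\mathcal O$ and codimension at least one in $\mathfrak g_I$ should eliminate the remaining candidates.
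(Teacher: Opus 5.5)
Your proposal is the paper's proof: the projective image of $\rho$ lies in $G$, so $\mathfrak g^{\rho(\Gm)}=\mathfrak g_I\subseteq\mathfrak g_{\{i\}}$, which gives $\frac12\dim\mathcal O+1\leq\max_i\dim\mathfrak g_{\{i\}}$, and comparing with the orbit dimensions in \cite[Section 8.4]{CM} leaves exactly the stated orbits. The one step you deferred, the values at the non-cominuscule nodes, is done in the paper by direct enumeration of positive roots ($16,1,5,2,5,16$ for $E_6$ and $1,7,2,3,5,10,27$ for $E_7$); this confirms the maxima $16$ and $27$, so the refinement in your last paragraph is not needed.
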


\begin{proof}
Suppose that
\[
\overline{\mathcal O}\cong(T^*X)^{\aff}
\]
for a smooth quasi-affine variety $X$. Let
$\rho:\Gm\to\operatorname{GL}(\mathfrak g)$ be the tangent
representation induced by fiber dilation. By Lemmas
\ref{lem:fixed-tangent-cone} and \ref{lem:strict-bound},
\begin{equation}\label{eq:exceptional-lower-bound}
\dim\mathfrak g^{\rho(\Gm)}
\geq\frac12\dim\mathcal O+1.
\end{equation}

By \cite[Lemma 5.2]{Jia}, the representation $\rho(\Gm)$ preserves the
tangent cone $C_0\overline{\mathcal O}=\overline{\mathcal O}$. Hence it
preserves $\overline{\mathcal O}_{\min}$ by Lemma
\ref{lem:minimal-preserved}. Its projective image lies in the identity
component of the projective stabilizer of the adjoint variety, which
is the adjoint group $G$ by Proposition
\ref{prop:projective-stabilizer}.
Thus, by Lemma \ref{lem:top-root}, up to conjugation by $G$, there
is a nonempty set $I$ of simple roots such that
\[
\mathfrak g^{\rho(\Gm)}=\mathfrak g_I.
\]
For every $i\in I$, we have
$\mathfrak g_I\subseteq\mathfrak g_{\{i\}}$.

We use the Bourbaki numbering of the simple roots. A direct enumeration
of the positive roots gives
\[
\begin{array}{c|rrrrrrr}
 &1&2&3&4&5&6&7\\
\hline
E_6&16&1&5&2&5&16&-\\
E_7&1&7&2&3&5&10&27
\end{array}
\qquad
\left(\dim\mathfrak g_{\{i\}}\right)_{i}.
\]
Indeed, $\mathfrak g_{\{i\}}$ is the sum of the one-dimensional root
spaces for the roots whose $\alpha_i$-coefficient is the coefficient
of $\alpha_i$ in the highest root. Thus
\begin{equation}\label{eq:E6E7-upper-bound}
\dim\mathfrak g^{\rho(\Gm)}\leq16
\quad\text{in type }E_6,
\qquad
\dim\mathfrak g^{\rho(\Gm)}\leq27
\quad\text{in type }E_7.
\end{equation}

Suppose first that $\mathfrak g$ is of type $E_6$. The minimal orbit
has dimension $22$. Every other nonzero nilpotent orbit has dimension
at least $32$ \cite[Section 8.4]{CM}. If $\mathcal O$ is not minimal,
then by \eqref{eq:exceptional-lower-bound},
\[
\dim\mathfrak g^{\rho(\Gm)}\geq17,
\]
contrary to \eqref{eq:E6E7-upper-bound}. Thus
$\mathcal O=\mathcal O_{\min}(E_6)$.

Now suppose that $\mathfrak g$ is of type $E_7$. The minimal orbit has
dimension $34$, and $\mathcal O(2A_1)$ has dimension $52$. Every other
nonzero nilpotent orbit has dimension at least $54$
\cite[Section 8.4]{CM}. If $\mathcal O$ is neither of these two
orbits, then by \eqref{eq:exceptional-lower-bound},
\[
\dim\mathfrak g^{\rho(\Gm)}\geq28,
\]
again contrary to \eqref{eq:E6E7-upper-bound}. Therefore
\[
\mathcal O=\mathcal O_{\min}(E_7)
\quad\text{or}\quad
\mathcal O=\mathcal O(2A_1).
\qedhere
\]
\end{proof}

\bigskip
\noindent
Boming Jia

\noindent
Yau Mathematical Sciences Center,\\
Jingzhai 301, Tsinghua University,\\
Beijing 100084, China

\noindent
Email: \href{mailto:jiabm@tsinghua.edu.cn}{jiabm@tsinghua.edu.cn}

\end{document}